\documentclass[12pt,a4paper]{article}%
\usepackage[utf8]{inputenc}
\usepackage{hyperref}
\usepackage{amsmath}
\usepackage{amsfonts}
\usepackage{amssymb}
\usepackage{xcolor}
\usepackage{graphicx}%
\usepackage{tikz}%
\usetikzlibrary{calc}%
\usetikzlibrary{decorations.pathreplacing}%
\usepackage{xspace}%
\usepackage{lineno}%
\providecommand{\U}[1]{\protect\rule{.1in}{.1in}}
\newtheorem{theorem}{Theorem}

\newtheorem{corollary}[theorem]{Corollary}

\newtheorem{lemma}[theorem]{Lemma}

\newtheorem{observation}[theorem]{Observation}

\newtheorem{proposition}[theorem]{Proposition}
\newtheorem{remark}[theorem]{Remark}

\newenvironment{proof}[1][Proof]{\noindent\textbf{#1.} }{\ \hfill \rule{0.5em}{0.5em}\bigskip}
\graphicspath{{Slike/}}

\newcommand{\MOP}{\textup{MOP}\xspace}
\newcommand{\MOPs}{\textup{MOPs}\xspace}
\begin{document}

\title{An improved upper bound for the fair domination number of maximal
outerplanar graphs}
\author{Yair Caro$^{1}$, Riste \v{S}krekovski$^{2}$ \\[0.3cm]
\parbox{0.92\textwidth}{\centering\small $^{1}$ \textit{Department of Mathematics and Physics, University of Haifa-Oranim, Tivon 36006, Israel}\\[0.1cm] $^{2}$ \textit{Faculty of
Mathematics and Physics, University of Ljubljana; Rudolfovo --
Science and Technology Centre Novo Mesto; Faculty of Information
Studies, University of Novo Mesto, Slovenia}}
}
\date{}
\maketitle

\begin{abstract}
A dominating set $D$ of a graph $G$ is a \emph{fair dominating set} if every
two vertices outside $D$ have the same number of neighbors in $D$, and the
\emph{fair domination number} $\mathrm{fd}(G)$ is the minimum cardinality of
such a set. Caro, Hansberg and Henning, who introduced this parameter, proved
that $\mathrm{fd}(G)<17n/19$ for every maximal outerplanar graph $G$ of order
$n\geq3$, and asked whether this bound is asymptotically best possible. We show
that it is not the case by proving $\mathrm{fd}(G)\leq(7n-3)/8<7n/8$ for every maximal
outerplanar graph $G$ of order $n\geq3$, and we exhibit an infinite family of
maximal outerplanar graphs with $\mathrm{fd}(G)/n\rightarrow7/9$, so that the
best asymptotic constant lies between $7/9$ and $7/8$.
\end{abstract}

\textit{Keywords:} fair domination; maximal outerplanar graphs; independent
set; out-regular set.

\textit{AMS Subject Classification numbers:} 05C69; 05C10.

\section{Introduction}

For notation and graph theory terminology not defined here we follow
\cite{Harary}, \cite{CaroHansbergHenning} and \cite{West}. Let $G=(V,E)$ be a graph of order $n=|V|$ and let
$N(v)$ denote the open neighborhood of a vertex $v$, $d(v)=|N(v)|$ its degree.
A set $D\subseteq V$ is a \emph{dominating set} if every vertex outside $D$
has a neighbor in $D$.

Caro, Hansberg and Henning \cite{CaroHansbergHenning} introduced the following
variant of domination. A dominating set $D$ of $G$ is a \emph{fair dominating
set} (an FD-set) if $\left\vert N(u)\cap D\right\vert =\left\vert N(v)\cap
D\right\vert $ for every two vertices $u,v\in V\setminus D$; the \emph{fair
domination number} $\mathrm{fd}(G)$ is the minimum cardinality of an FD-set of
$G$. Dually, a set $Q\subseteq V$ is an \emph{out-regular set} (an OR-set) if
$\left\vert N(u)\setminus Q\right\vert =\left\vert N(v)\setminus Q\right\vert
>0$ for every $u,v\in Q$; this common value is called the \emph{out-degree}
of $Q$, and $\xi_{or}(G)$ denotes the maximum cardinality of
an OR-set of $G$. Among other results, \cite{CaroHansbergHenning} establishes
the identity
\[
\mathrm{fd}(G)+\xi_{or}(G)=n
\]
for every graph $G$ of order $n\geq2$, and uses it to bound $\mathrm{fd}(G)$
for several graph classes. In particular, if $G$ is a \emph{maximal
outerplanar graph} (a \MOP, that is, a triangulation of a convex polygon) of
order $n\geq3$, then
\[
\mathrm{fd}(G)<\frac{17n}{19},
\]
and the authors remark that they do not know whether this bound is
asymptotically best possible, in the sense that $\mathrm{fd}(G)/n\rightarrow
17/19$ as $n\rightarrow\infty$; improving it is listed as an open problem in
\cite{CaroHansbergHenning}.

Let us first mention that fair domination has received much attention, with
nearly $100$ papers citing \cite{CaroHansbergHenning} (as can be seen in
Google Scholar, September 2026), and yet after more than a decade the bound
$17n/19$ has stood without improvement. An explicit attempt
to improve it has been made in \cite{HajianJafariRad}; however, the
improvement obtained there holds only for outerplanar graphs with
$e(G)\leq91n/76$, which is far from maximal outerplanar graphs.

The proof of the bound $17n/19$ is based on the notion of a \emph{regular
independent set}: $\alpha_{reg}(G)$ denotes the maximum cardinality of an
independent set all of whose vertices have equal degree in $G$. This
parameter was introduced by Albertson and Boutin \cite{AlbertsonBoutin} and
further developed, together with its degree-restricted variants, by Caro,
Hansberg and Pepper \cite{CaroHansbergPepper}, who, among other graph
classes, consider $2$-trees and hence in particular \MOPs. The bound
$\alpha_{reg}(G)\geq\tfrac{2}{19}(n+3)$ for every \MOP $G$ is exactly what
the proof of $17n/19$ in \cite{CaroHansbergHenning} implicitly establishes,
as is recorded in \cite[Table~1]{CaroHansbergPepper}; indeed
$n-\tfrac{2}{19}(n+3)=(17n-6)/19<17n/19$.

The first author of this paper suggested that a further improvement of this upper bound
should come from a more general argument than merely squeezing further the
idea of a regular independent set, namely to reconsider large out-regular
subgraphs of a \MOP, rather than restricting attention to regular independent
sets. We revisit the proof of the bound $17n/19$ and find that the mechanism
it already uses -- an out-regular subset of a single degree class -- can
indeed be pushed considerably further once one drops the requirement that
the set be independent and instead only asks that it induce a regular graph.
In this paper we show that this refinement, applied to the vertices of
degree $2$ and $3$ of a \MOP simultaneously and extending the result and
technique of \cite{CaroHansbergHenning}, gives
$\mathrm{fd}(G)\leq(7n-3)/8<7n/8$, and we further exhibit, in
Section~\ref{Sec_lower}, an infinite family of \MOPs with
$\mathrm{fd}(G)/n\rightarrow7/9$; so the best asymptotic constant lies between
$7/9$ and $7/8$, and we do not know its value. We close with some remarks on where the gap between these two
constants comes from.

\section{Preliminaries}\label{Sec_prelim}

We fix a \MOP $G$ of order $n\geq3$ together with its outer cycle
$C=v_{0}v_{1}\cdots v_{n-1}v_{0}$; the edges of $C$ are the \emph{outer edges}
of $G$, all remaining edges are \emph{chords}. For $i\geq2$ we write
$V_{i}=\{v\in V(G):d(v)=i\}$ and $n_{i}=\left\vert V_{i}\right\vert $.

We will use the following well known structural facts about \MOPs, all of
which are folklore or immediate from the fact that a \MOP is precisely a
triangulated polygon; see \cite{CaroHansbergHenning} and the references
therein.

\begin{observation}
\label{Obs_F1}Let $G$ be a \MOP of order $n\geq3$. Then $\left\vert
E(G)\right\vert =2n-3$, and hence $\sum_{v\in V(G)}d(v)=4n-6$; moreover
$\delta(G)=2$, and $G$ is both $2$-degenerate and $3$-colorable.
\end{observation}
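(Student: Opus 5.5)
We prove the four assertions of Observation~\ref{Obs_F1} in turn, all by induction on $n$ using the standard ear decomposition of a triangulated polygon.

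\textbf{Ear lemma.} For $n\geq4$ the polygon is triangulated by $n-3$ non-crossing chords, and at least one chord exists. Among all chords $v_iv_j$, choose one that cuts off a minimal sub-polygon. The triangle of the triangulation lying on that side of the chord has its third vertex $w$ strictly between $v_i$ and $v_j$ on $C$. By minimality, $w$ is incident with no chord. Hence its two neighbours on $C$ are adjacent (they are $v_i$ and $v_j$, with $v_iv_j$ a chord), and $d(w)=2$. Deleting $w$ leaves a \MOP of order $n-1$ whose outer cycle is obtained from $C$ by replacing the path $v_iwv_j$ with the edge $v_iv_j$. For $n=3$, $G$ is a triangle and every vertex has degree $2$.

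\textbf{Edge count.} The triangle has $3=2\cdot3-3$ edges. Removing an ear vertex of degree $2$ lowers $n$ by $1$ and $|E(G)|$ by $2$, so by induction $|E(G)|=2n-3$. The handshake lemma then gives $\sum_v d(v)=4n-6$.

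\textbf{Minimum degree, degeneracy and colouring.}
\begin{itemize}
\item $\delta(G)\leq 2$ follows from the existence of an ear vertex.
\item $\delta(G)\geq 2$ holds because every vertex lies on the Hamiltonian cycle $C$.
\item $2$-degeneracy: every induced subgraph $H$ of $G$ is outerplanar, since the inherited drawing keeps all vertices on the outer face. Then $|E(H)|\leq 2|V(H)|-3$ for $|V(H)|\geq 2$, by adding edges to reach a \MOP, or trivially for small $|V(H)|$. So $H$ has a vertex of degree at most $3$ as a first bound. To get degree at most $2$, note that any outerplanar graph on at least two vertices can be completed to a \MOP on the same vertex set (for $|V(H)|\geq3$), and in a \MOP some vertex has degree $2$. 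Its degree in $H$ is therefore at most $2$.
\item $3$-colourability: the ear decomposition gives it directly. Colour the triangle with three colours. When re-inserting an ear vertex $w$, its two neighbours $v_i,v_j$ are adjacent and so carry distinct colours, and $w$ receives the third colour.
\end{itemize}

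No step is a real obstacle. The only point needing care is the minimality argument guaranteeing an ear vertex of degree exactly $2$, and the degeneracy claim for arbitrary induced subgraphs, handled via completion to a \MOP. Alternatively, $2$-degeneracy follows from the $2$-tree structure by reversing the ear decomposition: every induced subgraph of a $2$-tree is a partial $2$-tree, hence has treewidth at most $2$ and a vertex of degree at most $2$.
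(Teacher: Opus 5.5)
Your proof is correct. The paper gives no argument of its own and cites these facts as folklore consequences of $G$ being a triangulated polygon; your ear-decomposition induction is exactly the standard argument behind them. Two minor points: the ear lemma only needs that \emph{some} chord exists when $n\geq4$ (otherwise an $n$-gonal face would remain untriangulated), so quoting ``$n-3$ chords'' there anticipates the edge count you prove only afterwards; and $2$-degeneracy can be read off directly from the ear elimination order (each removed vertex has at most two neighbours among the remaining vertices), which makes the completion-to-a-\MOP detour unnecessary.
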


\begin{observation}
[\cite{LaskarMulderNovick}]\label{Obs_F2}For every vertex $v$ of a \MOP $G$,
the set $N(v)$ induces a path in $G$.
\end{observation}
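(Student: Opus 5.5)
The statement to prove is Observation~\ref{Obs_F2}: for every vertex $v$ of a \MOP $G$, the set $N(v)$ induces a path in $G$. I would argue directly from the geometric picture of $G$ as a triangulated convex polygon with outer cycle $C=v_0v_1\cdots v_{n-1}v_0$, treating $n=3$ separately (there $N(v)$ is two adjacent vertices, a path $P_2$).

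\textbf{Step 1: order the neighbours.} Fix $v$ and list its neighbours $u_1,u_2,\dots,u_k$ in the cyclic order of $C$, starting just after $v$. Thus $u_1$ and $u_k$ are the two outer neighbours of $v$ on $C$, and $u_2,\dots,u_{k-1}$ are joined to $v$ by chords. Drawn in the convex polygon, the edges $vu_1,\dots,vu_k$ split the region near $v$ into consecutive angular sectors. Each sector, between $vu_i$ and $vu_{i+1}$, lies inside the polygon. Since the outerplane embedding is a triangulation and the angle at $v$ is less than $\pi$, every sector is covered by a bounded triangular face containing $v$. No edge at $v$ lies strictly between $vu_i$ and $vu_{i+1}$, so that face is exactly $vu_iu_{i+1}$. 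Hence $u_iu_{i+1}\in E(G)$ for $1\le i<k$, and $u_1u_2\cdots u_k$ is a spanning path of $G[N(v)]$.

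\textbf{Step 2: rule out extra edges.} This is the main point. Suppose $u_iu_j\in E(G)$ with $j\ge i+2$. The chord $u_iu_j$ separates the polygon, with $u_{i+1}$ strictly on one side, since $u_{i+1}$ lies on $C$ strictly between $u_i$ and $u_j$. The vertex $v$ lies strictly on the other side, since $v$ precedes $u_1$ in that cyclic order. Then the edge $vu_{i+1}$ must cross $u_iu_j$. This contradicts planarity of the straight-line convex drawing, in which two chords with interleaved endpoints always cross. So $G[N(v)]$ has no edges besides $u_iu_{i+1}$ and is exactly the path.

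I expect the only delicate step to be Step~1's claim that consecutive neighbours are adjacent; the convexity and triangulation argument above handles it. An alternative is induction on $n$: delete a degree-$2$ vertex $w$, which exists since $\delta(G)=2$ by Observation~\ref{Obs_F1}, and note that $G-w$ is a \MOP. Its two neighbours $x,y$ are adjacent, and adding $w$ back attaches $w$ to an end of the path $N_{G-w}(x)$ next to $y$, so that path extends. Checking that $y$ is an endpoint of $N_{G-w}(x)$ uses that $xy$ is an outer edge of $G-w$. I would use the geometric proof as the main argument.
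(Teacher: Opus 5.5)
Your proof is correct. It takes a different route from the paper only in that the paper gives no proof of Observation~\ref{Obs_F2}: it simply cites Laskar, Mulder and Novick.

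The useful comparison is with the paper's proof of Observation~\ref{Obs_F3}. There the authors use exactly your Step~1: the triangles at $v$ form a fan, so cyclically consecutive neighbours are adjacent, which gives $d-1$ edges inside $N(v)$. They then invoke Observation~\ref{Obs_F2} to conclude that there are no further edges.

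Your Step~2 is precisely the piece the paper outsources to the citation. If $u_iu_j$ with $j\ge i+2$ were an edge, it would cross the chord $vu_{i+1}$, since $v,u_i,u_{i+1},u_j$ interleave on $C$. Consequently your argument proves Observations~\ref{Obs_F2} and~\ref{Obs_F3} together and without external input, because the ends of your path are $u_1$ and $u_k$, the outer neighbours of $v$. The price is reliance on the convex straight-line picture, which is legitimate here since the paper defines a \MOP as a triangulated convex polygon.

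Your inductive alternative is a purely combinatorial route. As you half-note, it only closes if the induction hypothesis is strengthened to include the endpoint statement of Observation~\ref{Obs_F3} for $G-w$. That is what guarantees $y$, an outer neighbour of $x$ in $G-w$, is an end of the path $G[N_{G-w}(x)]$, so that attaching $w$ at $y$ yields a path again. Proving the two observations simultaneously by induction on $n$ handles this cleanly.
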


The next refinement of Observation \ref{Obs_F2} identifies the ends of that
path.

\begin{observation}
\label{Obs_F3}Let $v$ be a vertex of a \MOP $G$ and let $w,w^{\prime}$ be the
two neighbors of $v$ on $C$. Then $w,w^{\prime}$ are precisely the two ends of
the path induced by $N(v)$.
\end{observation}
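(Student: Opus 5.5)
We will prove Observation~\ref{Obs_F3} using the convex embedding of $G$: place $v_0,\dots,v_{n-1}$ on a circle in cyclic order and draw every chord as a straight segment. Since $G$ is a triangulated polygon, no two chords cross, and the bounded faces are exactly the triangles of the triangulation. If $d(v)=2$, then $N(v)=\{w,w'\}$ and, by Observation~\ref{Obs_F2}, the path induced by $N(v)$ is the single edge $ww'$, so the claim holds trivially. We therefore assume $d(v)\ge 3$.

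The main step is to order the neighbours of $v$ angularly. Seen from $v$, the neighbours appear in the order in which they occur along $C$ when we walk from $w$ to $w'$ avoiding $v$; write them as $w=u_1,u_2,\dots,u_k=w'$. The segments $vu_1,\dots,vu_k$ divide the polygon into consecutive sectors. Each sector lies between two consecutive rays $vu_j$ and $vu_{j+1}$ and contains no other edge at $v$. The face of $G$ inside this sector that is incident to $v$ must be a triangle with two sides $vu_j$ and $vu_{j+1}$, so $u_ju_{j+1}\in E(G)$. Hence $u_1u_2\cdots u_k$ is a path in $G[N(v)]$.

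Next we show there are no further edges: if $u_iu_j\in E(G)$ with $j\ge i+2$, then $u_{i+1}$ lies strictly inside the triangle $vu_iu_j$ or on the opposite side of the chord $u_iu_j$ from $v$. In the first case $u_{i+1}$ could not lie on the outer cycle, which is a contradiction. In the second case the segment $vu_{i+1}$ would cross $u_iu_j$. Both are impossible because chords of a convex polygon with interleaved endpoints cross. This strict inclusion comes from the convex-position ordering: $u_{i+1}$ lies on the arc of $C$ strictly between $u_i$ and $u_j$, on the side not containing $v$. So $G[N(v)]$ is exactly the path $u_1\cdots u_k$, consistent with Observation~\ref{Obs_F2}, and its ends are $u_1=w$ and $u_k=w'$.

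The only delicate point is setting up the angular/cyclic ordering correctly, in particular justifying that the faces at $v$ are triangles lying between consecutive neighbours. This is where we invoke the non-crossing of chords in the convex drawing and the fact that every bounded face is a triangle; the rest follows from the definition of a triangulated polygon.
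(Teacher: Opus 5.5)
Your proof is correct. Its first half, in which the triangles at $v$ fill the consecutive angular sectors so that consecutive neighbours $u_j,u_{j+1}$ are adjacent, is exactly the paper's fan argument.

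The second half takes a different route. The paper does not exclude extra edges geometrically; it counts. The fan already supplies $d(v)-1$ edges inside $N(v)$, and by Observation~\ref{Obs_F2} the graph $G[N(v)]$ is a path. A path on $d(v)$ vertices has exactly $d(v)-1$ edges, so there are no others. You instead rule out an edge $u_iu_j$ with $j\ge i+2$ directly: $v,u_i,u_{i+1},u_j$ interleave on $C$, so $u_iu_j$ would cross the chord $vu_{i+1}$.

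The paper's version is shorter but relies on the cited result of Laskar, Mulder and Novick. Yours is self-contained given the convex drawing, and it actually reproves Observation~\ref{Obs_F2} along the way. Your appeal to that observation in the case $d(v)=2$ is therefore unnecessary, since the sector argument with $k=2$ already gives $ww'\in E(G)$.

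One small point of exposition. Your ``first case'' ($u_{i+1}$ strictly inside the triangle $vu_iu_j$) cannot occur for vertices in convex position, so the crossing case is the only real one. The sentence attributing both exclusions to crossing chords runs these two reasons together.
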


\begin{proof}
Let $d=d(v)$ and let $w=w_{0},w_{1},\ldots,w_{d-1}=w^{\prime}$ be the
neighbors of $v$ listed in the cyclic order in which they occur around $C$.
The triangles of $G$ incident with $v$ form a fan on $v$, so $w_{j}w_{j+1}\in
E(G)$ for $0\leq j\leq d-2$. These are already $d-1$ edges inside $N(v)$, and
by Observation \ref{Obs_F2} the graph $G[N(v)]$ is a path, which has exactly
$d-1$ edges. Hence $G[N(v)]$ is exactly the path $w_{0}w_{1}\cdots w_{d-1}$,
whose ends are $w=w_{0}$ and $w^{\prime}=w_{d-1}$.
\end{proof}

\begin{observation}
\label{Obs_F4}If $G$ is a \MOP of order $n\geq4$, then $n_{2}\geq2$ and
$V_{2}$ is independent.
\end{observation}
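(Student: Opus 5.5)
We prove the two claims of Observation \ref{Obs_F4} separately, both from the triangulated-polygon picture and the facts already recorded.

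\emph{Independence of $V_{2}$.} Let $v\in V_{2}$. By Observation \ref{Obs_F3} the two neighbors of $v$ are its two neighbors $w,w'$ on $C$, and $N(v)$ induces a path, so $ww'\in E(G)$. Suppose $u\in V_{2}$ is adjacent to $v$, say $u=w$. Then $N(u)=\{v,w'\}$, since $u$ is adjacent to $v$ and to $w'$. So $v$, $u$ and $w'$ are three consecutive vertices of $C$, and each of $v$ and $u$ has degree $2$. Since $n\geq4$, there is a vertex $x\notin\{v,u,w'\}$. As $G$ is $2$-connected (it is Hamiltonian via $C$), removing $w'$ leaves $G$ connected. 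But in $G-w'$ the vertices $u$ and $v$ have no neighbors outside $\{u,v\}$, because $N(u)\cup N(v)\subseteq\{u,v,w'\}$. This disconnects them from $x$, a contradiction. Hence $V_{2}$ is independent.

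\emph{$n_{2}\geq2$.} Induct on $n$. For $n=4$, the \MOP is $K_{4}$ minus an edge, which has exactly two vertices of degree $2$. For $n\geq5$, pick any chord $xy$; one exists since $|E(G)|=2n-3>n$ by Observation \ref{Obs_F1}. The chord splits $C$ into two cycles, each triangulated, giving \MOPs $G_{1}$ and $G_{2}$ on vertex sets $A_{1}\cup\{x,y\}$ and $A_{2}\cup\{x,y\}$ with $A_{1},A_{2}$ nonempty. We then show that each $G_{i}$ contributes a degree-$2$ vertex lying in $A_{i}$:
\begin{itemize}
\item If $|A_{i}|=1$, the single vertex of $A_{i}$ has degree $2$ in $G$.
\item Otherwise $G_{i}$ has order at least $4$. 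By induction it has two nonadjacent degree-$2$ vertices, by the independence just proved. Since $x$ and $y$ are adjacent, at most one of those two vertices lies in $\{x,y\}$, so the other lies in $A_{i}$.
\end{itemize}
A vertex of $A_{i}$ has all its $G$-neighbors inside $G_{i}$, so its degree in $G$ equals its degree in $G_{i}$. This yields two distinct vertices of degree $2$ in $G$, one in $A_{1}$ and one in $A_{2}$.

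The main care needed is the splitting along a chord. We must check that each side is a \MOP, which holds because the triangles of $G$ lie on one side of $xy$. We must also check that degrees of vertices in $A_{i}$ are preserved, which holds because every edge incident to such a vertex lies within that side, as chords do not cross. Everything else is routine.
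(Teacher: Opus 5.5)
Your proof is correct. The paper gives no proof of this observation to compare against: it lists it among the folklore facts that are ``immediate from the fact that a \MOP is precisely a triangulated polygon'' and refers to the literature. Your argument therefore supplies a self-contained proof rather than an alternative to one in the text.

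The usual folklore route goes through the weak dual. The $n-2$ inner triangles of a \MOP form a tree, which for $n\geq4$ has at least two leaves. A leaf triangle has two outer edges, and their common vertex has degree $2$. Distinct leaf triangles give distinct such vertices, since a degree-$2$ vertex lies in only one triangle. That argument is shorter, but it imports the dual-tree fact.

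Your chord-splitting induction is essentially the two-ears argument. It uses only Observations \ref{Obs_F1}--\ref{Obs_F3} and the fact that chords do not cross. The one delicate point is that a degree-$2$ vertex of $G_i$ might be an end $x$ or $y$ of the splitting chord. You handle this correctly by using the independence part inside the induction, so the two degree-$2$ vertices of $G_i$ cannot both lie in $\{x,y\}$.

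For independence, your $2$-connectivity argument works. A slightly more direct finish is to apply Observation \ref{Obs_F3} to $u$ as well. Then $uv$, $uw'$ and $vw'$ are all outer edges, so $C$ would be a triangle, contradicting $n\geq4$.
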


We will make repeated use of the identity of \cite{CaroHansbergHenning}
recalled in the introduction.

\begin{proposition}
[\cite{CaroHansbergHenning}]\label{Prop_20}For every graph $G$ of order
$n\geq2$, $\mathrm{fd}(G)+\xi_{or}(G)=n$.
\end{proposition}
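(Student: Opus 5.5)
The identity follows from the complement correspondence $D\leftrightarrow Q=V\setminus D$. I will show that $D$ is an FD-set if and only if $V\setminus D$ is an OR-set. The only exception is $D=V$: its complement $\emptyset$ is not counted as an OR-set, so that case is handled separately.

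\emph{The correspondence.} Let $D\subsetneq V$ and put $Q=V\setminus D$, which is nonempty. For any $u\in Q$ we have $N(u)\setminus Q=N(u)\cap D$. So the condition that $|N(u)\cap D|$ is the same for all $u\in Q$ is exactly the condition that $|N(u)\setminus Q|$ is the same for all $u\in Q$. Domination of $Q$ by $D$ says $|N(u)\cap D|\geq1$ for every $u\in Q$. Given a common value, this is the same as requiring the common out-degree of $Q$ to be positive. Hence for nonempty $Q$, the set $D$ is a (proper) FD-set if and only if $Q$ is an OR-set, with $|D|+|Q|=n$.

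\emph{Taking extrema.} Minimising $|D|$ over proper FD-sets is the same as maximising $|Q|$ over OR-sets. It remains to check two things:
\begin{itemize}
\item an OR-set exists, so that $\xi_{or}(G)$ is defined;
\item the trivial FD-set $D=V$ never beats a proper one.
\end{itemize}
Both follow when $G$ has at least one edge. Since $n\geq2$, we can take an edge $uv$; then $V\setminus\{v\}$ is a dominating set whose single outside vertex trivially satisfies fairness. Thus $\mathrm{fd}(G)\leq n-1$ and $\{v\}$ is an OR-set. Consequently $\mathrm{fd}(G)=n-\xi_{or}(G)$.

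\emph{Main obstacle.} The only delicate point is the degenerate edgeless case. For an edgeless graph, $D=V$ is the only dominating set, and $\xi_{or}$ has no valid set. Here I would adopt the convention $\xi_{or}(G)=0$ (the maximum over an empty family), which again gives $\mathrm{fd}(G)=n=n-0$. For the \MOPs studied in this paper this case does not arise, since they have $2n-3\geq3$ edges.
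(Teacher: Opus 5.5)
Your proof is correct and takes essentially the same approach as the paper: the paper gives no proof of this identity and simply quotes it from Caro, Hansberg and Henning, and your complementation $D\leftrightarrow V\setminus D$ between proper FD-sets and non-empty OR-sets is the standard argument behind it. Your handling of the degenerate cases is also sound; the only nuance is that under the paper's literal definition the empty set vacuously satisfies the OR condition, so $\xi_{or}(G)=0$ for edgeless $G$ follows automatically and you do not need to adopt it as a convention.
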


The following observation is the starting point of this paper: it isolates
exactly what the proof of the bound $17n/19$ in \cite{CaroHansbergHenning}
uses, and what it leaves for further improvements.

\begin{observation}
\label{Obs_reg}Let $G$ be a \MOP of order $n\geq4$. Let $i\geq2$, let
$c\in\{0,1,2\}$ and let $Q\subseteq V_{i}$ be a non-empty set such that
$G[Q]$ is $c$-regular. Then $Q$ is an OR-set of out-degree $i-c$. (The
restriction $c\leq2$ is no loss of generality: being $2$-degenerate by
Observation \ref{Obs_F1}, a \MOP has no $3$-regular induced subgraph.)
\end{observation}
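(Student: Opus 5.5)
The claim follows straight from the definition of an OR-set, so I would simply compute $|N(v)\setminus Q|$ for an arbitrary $v\in Q$ and check that it does not depend on $v$ and is positive.

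Fix $v\in Q$. Since $Q\subseteq V_i$, we have $d(v)=i$. Since $G[Q]$ is $c$-regular, $v$ has exactly $c$ neighbours inside $Q$, that is, $|N(v)\cap Q|=c$. Hence
\[
|N(v)\setminus Q| = |N(v)|-|N(v)\cap Q| = i-c .
\]
This value is the same for all $v\in Q$. The only remaining requirement is positivity, i.e. $i-c>0$. This is the one point that needs an argument, because in general $c$ could equal $i$.

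\textbf{Positivity.}
\begin{itemize}
\item If $c<i$, positivity is immediate.
\item Since $i\geq 2$ and $c\leq 2$, the only problematic case is $i=c=2$. Then every vertex of $Q$ has degree $2$ and $Q$ induces a $2$-regular graph. But for $n\geq4$, Observation~\ref{Obs_F4} says $V_2$ is independent, so $G[Q]$ has no edges. This contradicts $2$-regularity, since $Q\neq\emptyset$.
\item The case $c>i$ cannot occur, because $c\leq d(v)=i$ for any $v\in Q$.
\end{itemize}
So $i-c\geq1$, and $Q$ is an OR-set of out-degree $i-c$.

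\textbf{The parenthetical remark.} A $3$-regular (or more generally $c$-regular with $c\geq3$) induced subgraph $H$ would have minimum degree at least $3$. This contradicts the $2$-degeneracy of $G$ from Observation~\ref{Obs_F1}, since every subgraph of a $2$-degenerate graph has a vertex of degree at most $2$. Therefore $c\in\{0,1,2\}$ covers all possibilities. I expect no real obstacle here; the only subtle step is excluding $i=c=2$ via Observation~\ref{Obs_F4}, which is where the hypothesis $n\geq4$ is used.
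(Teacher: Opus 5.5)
Your proof is correct and follows the paper's route: both compute $|N(v)\setminus Q|=d(v)-|N(v)\cap Q|=i-c$ directly, and both use the independence of $V_{2}$ (Observation~\ref{Obs_F4}) to rule out $i=c=2$, the only case where positivity could fail. The paper phrases this as ``$i=2$ forces $c=0$'' and then uses $i-c\geq i-2\geq1$ for $i\geq3$.
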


\begin{proof}
If $i=2$, then $V_{2}$ is independent (Observation \ref{Obs_F4}), so $Q$ is
$0$-regular, $c=0$, and $Q$ is an OR-set of out-degree $2$. For $i\geq3$,
$\left\vert N(u)\setminus Q\right\vert =d(u)-\left\vert N(u)\cap Q\right\vert
=i-c\geq i-2\geq1$ for every $u\in Q$.
\end{proof}

In \cite{CaroHansbergHenning}, only $c=0$ is used, that is, only independent
subsets of a single degree class; Observation \ref{Obs_reg} shows that this
is an unnecessary restriction, and the rest of the paper exploits the
freedom $c\in\{1,2\}$ for the classes $V_{2}$ and $V_{3}$.

\section{\texorpdfstring{The structure of $V_{2}\cup V_{3}$ in a \MOP}%
{The structure of V2 u V3 in a MOP}}\label{Sec_struct}

This section is devoted to a better understanding of the structure of the
graph induced by $V_{2}\cup V_{3}$ in a \MOP, and in particular to
strengthening Lemma 21 of \cite{CaroHansbergHenning}, which was crucial in
obtaining the upper bound $17n/19$.

\begin{lemma}
\label{Lemma_V2V3}Let $G$ be a \MOP of order $n\geq5$ and let $uv\in E(G)$
with $u,v\in V_{2}\cup V_{3}$. Then $uv$ is an outer edge.
\end{lemma}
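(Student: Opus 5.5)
We argue by contradiction: suppose $uv$ is a chord with $u,v\in V_{2}\cup V_{3}$. Since $V_{2}$ is independent (Observation \ref{Obs_F4}), not both ends have degree $2$. Moreover, a vertex $x\in V_{2}$ has only its two outer neighbors, so no chord is incident with it. Hence $u,v\in V_{3}$.

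The key tool is Observation \ref{Obs_F3}. For $x\in V_{3}$, write $N(x)=\{w,z,w'\}$, where $w,w'$ are the neighbors of $x$ on $C$. Then $G[N(x)]$ is the path $wzw'$, and the only chord at $x$ is $xz$. Applying this to $u$ with $z=v$, and to $v$ with $z=u$, we get $N(u)=\{a,v,b\}$ with $a,b$ the outer neighbors of $u$ and $av,vb\in E(G)$. Likewise $N(v)=\{c,u,d\}$ with $c,d$ the outer neighbors of $v$ and $cu,ud\in E(G)$.

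Now compare the two neighborhoods. Because $a,b\in N(v)\setminus\{u\}=\{c,d\}$, we have $\{a,b\}=\{c,d\}$. Thus $u$ and $v$ are two nonadjacent vertices of $C$ (recall $uv$ is a chord) with the same pair of outer neighbors $a,b$. The only way this happens on a cycle is if $C$ is the $4$-cycle $u\,a\,v\,b\,u$. This forces $n=4$, contradicting $n\geq5$.

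The argument has no real obstacle; the one step needing care is justifying that a degree-$3$ vertex carries exactly one chord, namely the middle vertex of its neighborhood path. This follows directly from Observation \ref{Obs_F3}. As a sanity check, for $n=4$ the two degree-$3$ vertices are joined by the diagonal chord, so the hypothesis $n\geq5$ is genuinely needed.
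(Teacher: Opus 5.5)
Your proof is correct and follows essentially the same route as the paper. You dispose of degree-$2$ endpoints via Observation~\ref{Obs_F3}, then use that the only chord at a degree-$3$ vertex goes to the middle vertex of its neighborhood path, which forces $u$ and $v$ to share both outer neighbors and hence $n=4$. The only cosmetic difference is that the paper obtains $N(v)=\{w_{0},u,w_{2}\}$ by a degree count and then reads off the ends of that path, whereas you apply the middle-vertex description symmetrically to both $u$ and $v$.
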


\begin{proof}
If $d(u)=2$, then by Observation \ref{Obs_F3} both neighbors of $u$ are its
outer neighbors, so every edge at $u$, in particular $uv$, is an outer edge.
So, by the symmetry of the roles of $u$ and $v$, we may assume $d(u)=d(v)=3$
and, to the contrary, that $uv$ is a chord. Let
$N(u)=\{w_{0},w_{1},w_{2}\}$, where by Observation \ref{Obs_F3} the vertices
$w_{0},w_{2}$ are the outer neighbors of $u$ and $G[N(u)]$ is the path
$w_{0}w_{1}w_{2}$; in particular $w_{0}w_{2}\not \in E(G)$. Since $uv$ is a
chord, $v=w_{1}$. Now $v$ is adjacent to $w_{0},u,w_{2}$ and $d(v)=3$, so
$N(v)=\{w_{0},u,w_{2}\}$. Inside $N(v)$ we have the edges $w_{0}u$ and
$uw_{2}$, and not the edge $w_{0}w_{2}$, so $G[N(v)]$ is the path
$w_{0}\,u\,w_{2}$, whose ends $w_{0},w_{2}$ are, by Observation
\ref{Obs_F3}, the outer neighbors of $v$. Consequently $uw_{0},uw_{2}%
,vw_{0},vw_{2}$ are all outer edges, so $C$ is the $4$-cycle
$u\,w_{0}\,v\,w_{2}$ and $n=4$, a contradiction.
\end{proof}

We call the connected components of $G[V_{2}\cup V_{3}]$ the \emph{runs} of
$G$.

\begin{corollary}
\label{Cor_runs}Let $G$ be a \MOP of order $n\geq5$. Then $V_{2}\cup
V_{3}\not =V(G)$, every run of $G$ consists of consecutive vertices of $C$,
no edge of $G$ joins two distinct runs, and $G[V_{3}]$ is bipartite.
\end{corollary}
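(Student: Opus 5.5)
The corollary has four parts, and I would prove each one from Lemma \ref{Lemma_V2V3} together with the degree count of Observation \ref{Obs_F1}.

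\emph{First part: $V_2\cup V_3\neq V(G)$.} If every vertex had degree $2$ or $3$, then $\sum d(v)\le 3n$. Observation \ref{Obs_F1} gives $\sum d(v)=4n-6$, and $4n-6>3n$ as soon as $n>6$. So only $n=5,6$ need a direct argument, and I would settle these by a degree count rather than by listing triangulations. With $n_2+n_3=n$ and $2n_2+3n_3=4n-6$ we get $n_3=2n-6$ and $n_2=6-n$. For $n=6$ this forces $n_2=0$, contradicting Observation \ref{Obs_F4}. For $n=5$ it gives $n_2=1$, again contradicting Observation \ref{Obs_F4}.

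\emph{Second part: runs are arcs of $C$.} By Lemma \ref{Lemma_V2V3}, every edge of $G[V_2\cup V_3]$ is an edge of the outer cycle $C$. Hence each run is a connected subgraph of $C$. It is not all of $C$, because by the first part some vertex lies outside $V_2\cup V_3$. A connected proper subgraph of a cycle is a path along consecutive vertices of $C$.

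\emph{Third part: no edge joins two distinct runs.} This is immediate from the definitions. An edge between two vertices of $V_2\cup V_3$ lies in $G[V_2\cup V_3]$, so its endpoints belong to the same component.

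\emph{Fourth part: $G[V_3]$ is bipartite.} $G[V_3]$ is a subgraph of $G[V_2\cup V_3]$, which we have just shown to be a disjoint union of paths, so it is a forest and hence bipartite.

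There is no real obstacle here. The only care needed is the small cases $n=5,6$ in the first part, which the degree count together with Observation \ref{Obs_F4} disposes of.
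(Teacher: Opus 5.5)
Your proof is correct and follows the paper's argument. Both use the degree count $n_2=6-n$ against Observation \ref{Obs_F4}; this computation already covers every $n\ge5$, so separating out the case $n>6$ is unnecessary. Both then apply Lemma \ref{Lemma_V2V3} to place $G[V_2\cup V_3]$ inside $C$. The only difference is in the third part: you note that distinct runs are non-adjacent simply because runs are components of an \emph{induced} subgraph, which is slightly more direct than the paper's second appeal to Lemma \ref{Lemma_V2V3}.
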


\begin{proof}
If $V_{2}\cup V_{3}=V(G)$, then $n_{2}+n_{3}=n$ and, by Observation
\ref{Obs_F1}, $2n_{2}+3n_{3}=4n-6$; together these give $n_{2}=6-n\leq1$ for
$n\geq5$, contradicting Observation \ref{Obs_F4}. By Lemma \ref{Lemma_V2V3},
$G[V_{2}\cup V_{3}]$ is a subgraph of $C$, and a proper subgraph of a cycle is
a disjoint union of paths on consecutive vertices; two vertices lying on
different such paths are non-adjacent on $C$, hence, by Lemma
\ref{Lemma_V2V3} once more, non-adjacent in $G$. Since each run is a path,
$G[V_{3}]$, being a union of subpaths of runs, is bipartite.
\end{proof}

Notice that Corollary \ref{Cor_runs} already reproves Lemma 21 of \cite
{CaroHansbergHenning} in the stronger form above.

\begin{lemma}
\label{Lemma_endrun}Let $G$ be a \MOP of order $n\geq5$. Then every $x\in
V_{2}$ has at most one neighbor in $V_{3}$; in particular, $x$ is an end
vertex of its run.
\end{lemma}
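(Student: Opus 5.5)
Let $x\in V_2$ with neighbors $a,b$. By Observation \ref{Obs_F3}, $a$ and $b$ are the two outer neighbors of $x$, and $G[N(x)]$ is a path on two vertices, so $ab\in E(G)$. Since $n\geq5$, $ab$ is not an outer edge: otherwise $C$ would be the triangle $xab$. So $ab$ is a chord. The plan is to suppose, to the contrary, that both $a$ and $b$ lie in $V_3$, and derive a contradiction with Lemma \ref{Lemma_V2V3}.

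Assume $d(a)=d(b)=3$. Then $a$ and $b$ are adjacent, both lie in $V_2\cup V_3$, and $ab$ is a chord. This directly contradicts Lemma \ref{Lemma_V2V3}. Hence at most one of $a,b$ lies in $V_3$, which proves the first assertion.

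For the second assertion, the same argument applies to $V_2$ as well. Lemma \ref{Lemma_V2V3} forbids both $a,b\in V_2\cup V_3$, since $ab$ is a chord. Thus at least one neighbor of $x$, say $b$, has degree at least $4$, and so $b$ does not belong to the run containing $x$. By Corollary \ref{Cor_runs}, the run of $x$ is a path of consecutive vertices of $C$. Inside it, $x$ has at most one neighbor, namely $a$, and only if $a\in V_2\cup V_3$. So $x$ has degree at most $1$ in its run, and is therefore an end vertex of it (or the run is the single vertex $x$).

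I do not expect any real obstacle. The only point needing care is that $ab$ is a chord, which uses $n\geq5$; in fact $n\geq4$ already suffices. It also helps to state explicitly that the argument rules out two neighbors in $V_2\cup V_3$, and not merely in $V_3$. This slightly stronger statement is what the end-vertex claim needs. Note that $V_2$ is independent by Observation \ref{Obs_F4}, so a neighbor of $x$ in the run is automatically in $V_3$ anyway.
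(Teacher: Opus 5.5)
Your proof is correct and is the paper's argument: $a,b$ are the outer neighbours of $x$, so $ab$ is a chord, and $a,b\in V_3$ would contradict Lemma~\ref{Lemma_V2V3}. The paper also splits off the case where $a$ and $b$ share their third neighbour (ruling it out by a cut-vertex argument), which your argument rightly shows is unnecessary, and it gets the end-vertex claim from the independence of $V_2$, as you also note.
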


\begin{proof}
Let $N(x)=\{a,b\}$. By Observation \ref{Obs_F2}, $ab\in E(G)$. Suppose that
$a,b\in V_{3}$; then each of them has exactly one more neighbor.
Suppose $a$ has neighbor $u$ and $b$ has neighbor $v$ with $u\neq v$. Since $x$ has
degree 2, by Observation \ref{Obs_F3}, vertices $a$ and $b$ are the two outer-neighbors of $x$.
Thus $ab$ is a chord (not an outer edge), contradicting Lemma \ref{Lemma_V2V3}.
If they share a common neighbor $z\neq x$, then as $n\geq5$, $z$ is a cut point,
contradicting the fact that $G$, as a \MOP, is $2$-connected.
The last statement follows since $V_{2}$ is independent (Observation \ref{Obs_F4}).
\end{proof}

\begin{lemma}
\label{Lemma_fan}Let $G$ be a \MOP of order $n\geq5$ and let $R$ be a run of
$G$ containing two vertices of $V_{2}$. Then $G$ is the fan $F_{n}%
=P_{n-1}+K_{1}$ (one hub vertex joined to all vertices of a path on the
remaining $n-1$ vertices), and
$R=V(G)\setminus\{w\}$, where $w$ is the hub.
\end{lemma}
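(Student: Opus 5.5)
By Corollary \ref{Cor_runs}, the run $R$ is a path $u_{1}u_{2}\cdots u_{k}$ of consecutive vertices of $C$. By Lemma \ref{Lemma_endrun}, every vertex of $V_{2}$ in $R$ is an end vertex of $R$. Hence $R$ contains exactly two vertices of $V_{2}$, namely its ends $u_{1}$ and $u_{k}$, and $u_{2},\dots,u_{k-1}$ all lie in $V_{3}$. The plan is to show that all of these vertices have a common neighbor $w$ and that $w$ is the only vertex outside $R$.

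First I handle the short runs. For $k=1$ the run is a single vertex, which cannot contain two vertices of $V_{2}$. For $k=2$ the two adjacent vertices of $V_{2}$ contradict Observation \ref{Obs_F4}. So $k\geq3$.

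The main step is an induction along the run. Let $w$ be the neighbor of $u_{1}$ other than $u_{2}$. By Observation \ref{Obs_F2}, $u_{2}w\in E(G)$. I claim that for $2\leq j\leq k-1$ we have $N(u_{j})=\{u_{j-1},u_{j+1},w\}$, and that $u_{j}w$ is a chord whenever $w\notin R$.

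For $j=2$: the vertex $u_{2}$ has degree $3$ and is adjacent to $u_{1},u_{3},w$. These are three distinct vertices, since $w=u_{3}$ would force $G$ to be a triangle or force $u_{1}$ to have its two neighbors inside $R$. So $N(u_{2})=\{u_{1},u_{3},w\}$. By Observation \ref{Obs_F3}, $G[N(u_{2})]$ is the path with ends $u_{1}$ and $u_{3}$, so its middle vertex is $w$ and $wu_{3}\in E(G)$.

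The inductive step is the same: from $wu_{j}\in E(G)$ we get $wu_{j+1}\in E(G)$, because $N(u_{j})$ is a path with ends $u_{j-1},u_{j+1}$ and middle $w$. At the far end, $u_{k}$ has degree $2$ and is adjacent to $u_{k-1}$ and $w$. Hence $w$ is adjacent to every vertex of $R$.

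It remains to check that $w\notin R$ and that $V(G)=R\cup\{w\}$. If $w\in R$, then $w$ would have degree at most $3$ while being adjacent to all $k\geq3$ other vertices of $R$. That forces $k\leq 4$, and such small cases give $n\leq4$, a contradiction. So $w\notin R$. Then $u_{1}$ and $u_{k}$ both have $w$ as their second outer neighbor, so the outer cycle is $C=u_{1}u_{2}\cdots u_{k}w\,u_{1}$ and $n=k+1$. Since $G$ consists of the outer path $u_{1}\cdots u_{k}$ together with the edges from $w$ to every $u_{j}$, it is the fan $F_{n}$ with hub $w$, and $R=V(G)\setminus\{w\}$.

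I expect the main obstacle to be the degenerate identifications: ruling out $w=u_{3}$, or $w$ being some other vertex of $R$, in the small cases $k=3,4$. Each of these cases will be dispatched with $n\geq5$, which leaves room for only a finite check.
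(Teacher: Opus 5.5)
Your proof is correct and is essentially the paper's argument: take $w$ to be the other neighbor of the first degree-$2$ end, propagate $wu_j\in E(G)$ along the run using Observation~\ref{Obs_F3} at each degree-$3$ vertex, and close the outer cycle as $u_1\cdots u_k w$; the only difference is that you run the induction one step further to get $wu_k\in E(G)$, where the paper applies Observation~\ref{Obs_F2} at the far end. One fix is needed: as written, your closing argument for $w\notin R$ is circular, because it relies on the induction, and each induction step already needs $w\notin\{u_{j-1},u_{j+1}\}$ --- so prove $w\notin R$ at the start, as the paper does, using the reason your $j=2$ remark already gives: $u_1$ is an end of the run and $R\neq V(G)$, so its second outer neighbor lies outside $R$.
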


\begin{proof}
By Lemma \ref{Lemma_endrun}, $R=x\,u_{1}u_{2}\cdots u_{r}\,y$ with
$x,y\in V_{2}$, $u_{1},\ldots,u_{r}\in V_{3}$, as $R$ is a run, and $r\geq1$,
as by Observation \ref{Obs_F4}, $x$ and $y$ are independent. Let $w$ be the
outer neighbor of $x$ not in $R$; then $d(w)\geq4$ and, by Observation
\ref{Obs_F2}, $wu_{1}\in E(G)$. We show by induction that $wu_{i}\in E(G)$
for every $1\leq i\leq r$; the case $i=1$ is done. Let $1\leq i<r$ and assume
$wu_{i}\in E(G)$. Writing $u_{0}=x$, the three vertices $u_{i-1},u_{i+1},w$
are distinct neighbors of $u_{i}$ and $d(u_{i})=3$, so $N(u_{i}%
)=\{u_{i-1},u_{i+1},w\}$. By Observation \ref{Obs_F3}, the ends of the path
$G[N(u_{i})]$ are the outer neighbors $u_{i-1},u_{i+1}$ of $u_{i}$, so that
path is $u_{i-1}\,w\,u_{i+1}$; in particular $wu_{i+1}\in E(G)$, which
completes the induction. Hence $N(u_{r})=\{u_{r-1},y,w\}$. Let $z$ be the
outer neighbor of $y$ not in $R$; by Observation
\ref{Obs_F2}, $zu_{r}\in E(G)$, so $z\in\{u_{r-1},w\}$. Since $z$ is an outer
neighbor of $y$ while $u_{r-1}$ is not (for $r\geq2$ it lies at distance $2$
from $y$ along $C$; for $r=1$, $u_{0}=x$, and $z=x$ would force $n=3$), we
obtain $z=w$. So $w$ is adjacent to every vertex of $R$, and its two outer
neighbors are $x$ and $y$; thus $C=w\,x\,u_{1}\cdots u_{r}\,y$ and $G$ is the
fan with hub $w$.
\end{proof}

\section{\texorpdfstring{A single out-regular set for $V_{2}$ and $V_{3}$ together}%
{A single out-regular set for V2 and V3 together}}

\begin{theorem}
\label{Tm_half}If $G$ is a \MOP of order $n\geq5$, then $\xi_{or}%
(G)\geq\frac{1}{2}(n_{2}+n_{3})$.
\end{theorem}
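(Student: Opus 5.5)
We will not look for a single construction. Instead we produce two out-regular sets $Q_{2}$ and $Q_{3}$, both contained in $V_{2}\cup V_{3}$ and of out-degree $2$ and $3$ respectively, such that
\[
|Q_{2}|+|Q_{3}|\geq n_{2}+n_{3}.
\]
Then $\xi_{or}(G)\geq\max\{|Q_{2}|,|Q_{3}|\}\geq\frac{1}{2}(n_{2}+n_{3})$.

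\emph{The fan.} First dispose of the case where some run contains two vertices of $V_{2}$. By Lemma \ref{Lemma_fan}, $G$ is then the fan $F_{n}$, and we take $Q=V(G)\setminus\{w\}$, where $w$ is the hub. The two ends of the path have degree $2$ and one $Q$-neighbor. Each interior vertex has degree $3$ and two $Q$-neighbors. So every vertex of $Q$ has out-degree $1$, and $|Q|=n-1\geq n_{2}+n_{3}$. From now on every run contains at most one vertex of $V_{2}$. By Corollary \ref{Cor_runs} and Lemma \ref{Lemma_endrun}, each run is then a path $x\,u_{1}\cdots u_{r}$ or $u_{1}\cdots u_{r}$ on consecutive vertices of $C$, with $x\in V_{2}$ and $u_{j}\in V_{3}$. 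No edges join distinct runs, so each run can be treated separately, and the out-degree conditions can be checked locally.

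\emph{The two sets.} The key observation, refining Observation \ref{Obs_reg} in the spirit of mixing degree classes, concerns a set $Q\subseteq V_{2}\cup V_{3}$ in which every chosen vertex of $V_{2}$ has no $Q$-neighbor and every chosen vertex of $V_{3}$ has exactly one $Q$-neighbor. Such a $Q$ is an OR-set of out-degree $2$. Likewise, an independent set $Q\subseteq V_{3}$ is an OR-set of out-degree $3$. The sets are built run by run as follows.
\begin{itemize}
\item For $Q_{3}$, take every other vertex $u_{1},u_{3},\ldots$ of the $V_{3}$-part of each run. This contributes $\lceil r/2\rceil$ per run.
\item For $Q_{2}$, on each run choose the better of the following patterns:
\begin{itemize}
\item blocks of two adjacent $V_{3}$-vertices separated by single gaps ($u_{1}u_{2},u_{4}u_{5},\ldots$), ignoring $x$;
\item $x$ together with such blocks starting at $u_{2}u_{3}$, leaving $u_{1}$ as a gap.
\end{itemize}
This contributes roughly $2r/3$, and at least $1$ whenever $x$ is present or $r\geq2$.
\end{itemize}
We must make sure that $Q_{2}$ and $Q_{3}$ are non-empty. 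This holds because $n_{2}\geq2$ by Observation \ref{Obs_F4}, and because some run has $r\geq1$ unless $n_{3}=0$. In the case $n_{3}=0$, $Q_{2}=V_{2}$ already suffices.

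\emph{Per-run inequality.} It remains to check that for each run $R$ the contributions satisfy $|Q_{2}\cap R|+|Q_{3}\cap R|\geq|R|$. Summing over runs then gives the desired bound. Asymptotically the left side is about $\frac{2r}{3}+\frac{r}{2}=\frac{7r}{6}$, which comfortably exceeds $r+1$ for large $r$. So the main obstacle is the finite list of small runs, and I would tabulate $r\leq 5$ with and without $x$:
\begin{itemize}
\item $r=1$ without $x$: $0+1=1$.
\item $r=1$ with $x$: $1+1=2$.
\item $r=2$ with $x$: the pair $u_{1}u_{2}$ gives $2$, plus $1$, totalling $3$.
\item $r=3$ without $x$: $2+2\geq3$.
\end{itemize}
For larger $r$, I would use the closed form $\max\{2\lfloor (r+1)/3\rfloor,\,1+2\lfloor r/3\rfloor\}+\lceil r/2\rceil\geq r+1$. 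This closed form is the step I expect to need care, since the floor terms must be checked modulo $6$.
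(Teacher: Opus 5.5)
Your proposal is correct and follows the paper's proof. You handle the fan separately, then on each run compare an independent subset of $R\cap V_{3}$ (out-degree $3$) with the degree-$2$ vertex plus an induced matching of the $V_{3}$-path (out-degree $2$), using the same two matching patterns; their sizes $2\lfloor (r+1)/3\rfloor$ and $1+2\lfloor r/3\rfloor$ equal the paper's $2\lceil (r-1)/3\rceil$ and $2\lceil (r-2)/3\rceil+1$.

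You still have to carry out the finite check you deferred. The per-run inequality is tight for several small runs, for instance $r=4$ both with and without $x$. With $x$, the crude estimate $\tfrac{2r-1}{3}+\tfrac{r}{2}\geq r+1$ only holds from $r\geq 8$, so every $r\leq 7$ must be verified directly, as the paper does, or via your residue-mod-$6$ computation. This includes $r=0$, a run consisting of $x$ alone, and $r=6,7$, which your planned table up to $r\leq 5$ would miss.
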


\begin{proof}
If $G$ is a fan with hub $w$, then $Q=V(G)\setminus\{w\}$ is an OR-set of
out-degree $1$, since every vertex of the path on $Q$ has exactly one
neighbor outside $Q$, namely $w$; since $w$ has degree $n-1\geq4$ for
$n\geq5$, Corollary \ref{Cor_runs} gives $n_{2}+n_{3}\leq n-1$, so
$\xi_{or}(G)\geq\left\vert Q\right\vert =n-1\geq n_{2}+n_{3}$ and we are done.
(In fact $\xi_{or}(G)=n-1$ here, since $Q=V(G)$ has out-degree $0$ and is
therefore not an OR-set.) So we may assume that $G$
is not a fan, and, by Lemma
\ref{Lemma_fan}, that every run $R$ of $G$ contains at most one vertex of
$V_{2}$.

Fix a run $R$, put $h=\left\vert R\right\vert $, $t=\left\vert R\cap
V_{2}\right\vert \in\{0,1\}$ and $r=\left\vert R\cap V_{3}\right\vert
=h-t$, and define
\[
v_{3}(R)=\max\{\left\vert Q\right\vert :Q\subseteq R\cap V_{3},\ Q\text{
independent}\},
\]
\[
v_{2}(R)=\max\{\left\vert Q\right\vert :Q\subseteq R,\ \left\vert N(u)\cap
Q\right\vert =0\ \forall u\in Q\cap V_{2},\ \left\vert N(u)\cap Q\right\vert
=1\ \forall u\in Q\cap V_{3}\}.
\]

\medskip

\noindent\textbf{Claim A.} \emph{For every run }$R$\emph{, }$v_{2}%
(R)+v_{3}(R)\geq h$.\smallskip

\noindent The mechanism behind $v_{2}(R)$ is the following, and it is the
whole idea of this section: the vertex set of an \emph{induced matching}
inside $R\cap V_{3}$ is a set in which every vertex has exactly one neighbor,
hence out-degree $3-1=2$, which is precisely the out-degree of a vertex of
$V_{2}$ having no neighbor in the set. This is what allows $V_{2}$ and
$V_{3}$ to be packed into one and the same OR-set.

Since $R\cap V_{3}$ induces the path $P_{r}$, we have $v_{3}%
(R)=\left\lceil r/2\right\rceil $, so it suffices to show $v_{2}%
(R)\geq\left\lfloor r/2\right\rfloor +t$. Recall that the maximum number of
edges of an induced matching of a path on $\ell$ vertices is $\left\lceil
(\ell-1)/3\right\rceil $ for $\ell\geq1$ (and $0$ for $\ell\leq1$); the
vertex set of an induced matching inside $R\cap V_{3}$ is admissible for
$v_{2}(R)$, since each of its vertices then has exactly one neighbor in it.

If $t=0$, this already gives $v_{2}(R)\geq2\left\lceil (r-1)/3\right\rceil $,
and $2\left\lceil (r-1)/3\right\rceil \geq\left\lfloor r/2\right\rfloor $
holds for $r\leq3$ by inspection and for $r\geq4$ because $2(r-1)/3\geq
r/2$ once $r\geq4$.

If $t=1$ and $r=0$, the run consists of a single degree-$2$ vertex $x$, and
$Q=\{x\}$ is admissible for $v_{2}(R)$, so $v_{2}(R)\geq1=\left\lfloor
0/2\right\rfloor +1$, as required. If $t=1$ and $r\geq1$, write
$R=x\,u_{1}\cdots u_{r}$ with $x\in V_{2}$. Taking an induced
matching of the subpath $u_{2}\cdots u_{r}$ together with $x$ (admissible,
since then $u_{1}\not \in Q$, so $x$ has no neighbor in $Q$) gives
$v_{2}(R)\geq2\left\lceil (r-2)/3\right\rceil +1$; taking instead an induced
matching of the whole subpath $u_{1}\cdots u_{r}$ gives $v_{2}(R)\geq
2\left\lceil (r-1)/3\right\rceil $. Checking $r\leq7$ directly shows that
the larger of these two already reaches $\left\lfloor r/2\right\rfloor +1$
in every case, and for $r\geq8$ the first bound alone suffices, since
$4(r-2)\geq3r$ once $r\geq8$. This establishes Claim A.

\medskip

Summing Claim A over all runs $R$ and using Corollary \ref{Cor_runs} gives
\[
\sum_{R}v_{2}(R)+\sum_{R}v_{3}(R)\geq\sum_{R}\left\vert R\right\vert
=n_{2}+n_{3},
\]
so $\max\{\sum_{R}v_{2}(R),\sum_{R}v_{3}(R)\}\geq\frac{1}{2}(n_{2}+n_{3})$.
It remains to observe that both sums are cardinalities of OR-sets of $G$. Let
$Q$ be the union, over all runs, of the corresponding optimal sets. By
Corollary \ref{Cor_runs} there is no edge between two distinct runs, so for
every $u\in Q$ the neighbors of $u$ inside $Q$ are exactly its chosen
neighbors within its own run. In either case $\left\vert Q\right\vert
\geq\frac{1}{2}(n_{2}+n_{3})\geq1$ by Observation \ref{Obs_F4}, so $Q$ is
non-empty. Hence, if $Q$ realizes $\sum_{R}v_{3}(R)$, then $Q\subseteq V_{3}$ is
independent by the definition of $v_{3}(R)$ together with Corollary
\ref{Cor_runs} (independent within each run, and no edges between distinct
runs), so $Q$ is an OR-set of out-degree $3$ by Observation \ref{Obs_reg};
and if $Q$ realizes $\sum_{R}v_{2}(R)$, then, by the definition of $v_{2}(R)$
together with Corollary \ref{Cor_runs} once more, $\left\vert N(u)\setminus
Q\right\vert =2$ for $u\in Q\cap V_{2}$ and $\left\vert N(u)\setminus
Q\right\vert =3-1=2$ for $u\in Q\cap V_{3}$, so $Q$ is an OR-set of
out-degree $2$. In either case $\xi_{or}(G)\geq\left\vert Q\right\vert
\geq\frac{1}{2}(n_{2}+n_{3})$.
\end{proof}

\begin{remark}
\label{Rem_half_sharp}Theorem \ref{Tm_half} is sharp: computation over all
\MOPs of order $n\leq16$ gives $\min\,\xi_{or}(G)/(n_{2}+n_{3})=1/2$, attained
from $n=12$ on (and for the family $H_{m}$ of Section \ref{Sec_lower} the
ratio is $(2m+2)/(4m+3)$, which tends to $1/2$). Moreover, since $\xi
_{or}(G)$ is an integer, Theorem \ref{Tm_half} gives $\xi_{or}%
(G)\geq\left\lceil (n_{2}+n_{3})/2\right\rceil $, and the family $H_{m}$
attains this integral form with equality for \emph{every} $m\geq2$, as
$\left\lceil (4m+3)/2\right\rceil =2m+2=\xi_{or}(H_{m})$ by Theorem
\ref{Tm_strip}.
\end{remark}

%----------------------------------------------------------------------------------------------------
\section{The improved bound}\label{Sec_upper}

\begin{theorem}
\label{Tm_main}If $G$ is a \MOP of order $n\geq3$, then
\[
\mathrm{fd}(G)\leq\frac{7n-3}{8}<\frac{7n}{8}.
\]
\end{theorem}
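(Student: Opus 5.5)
By Proposition \ref{Prop_20}, the inequality $\mathrm{fd}(G)\leq(7n-3)/8$ is equivalent to
\[
\xi_{or}(G)\geq\frac{n+3}{8}.
\]
I would prove this lower bound. For $n=3,4$ it is a direct check: $K_3$ and the two $4$-vertex \MOPs have an OR-set of size at least $1$, and $(n+3)/8\leq 7/8<1$. So assume $n\geq5$ and write $s=n_2+n_3$. Theorem \ref{Tm_half} gives $\xi_{or}(G)\geq s/2$, which settles every case with $s\geq(n+3)/4$.

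The remaining case, few vertices of degree $2$ or $3$, has to be handled through the degree-$\geq4$ classes. The degree sum in Observation \ref{Obs_F1} gives
\[
\sum_{i\geq5}(i-4)n_i=2n_2+n_3-6\leq 2s-6 .
\]
Hence $n_{\geq5}\leq 2s-6$, and so $n_4\geq n-3s+6$. By Observation \ref{Obs_F1}, $G[V_4]$ is $3$-colorable, so it contains an independent set of size at least $n_4/3$. By Observation \ref{Obs_reg} this set is an OR-set of out-degree $4$.

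Balancing $s/2$ against $(n-3s+6)/3$ only yields about $(n+6)/9$, so these two estimates alone are too weak. The extra gain must come from the freedom of Observation \ref{Obs_reg} to use non-independent sets and to mix degree classes with equal out-degree. I would use three mixed constructions:
\begin{itemize}
\item a $1$-regular induced subgraph inside $V_4$ (out-degree $3$), joined with an independent subset of $V_3$;
\item a $2$-regular induced subgraph inside $V_4$ (out-degree $2$), joined with the $v_2$-type sets of Theorem \ref{Tm_half};
\item independent parts of $V_5$ (out-degree $5$) or $V_4$ together with suitable subsets of higher classes.
\end{itemize}
The estimate $\max\geq$ (weighted average) over these candidate sets, with the weights chosen to cancel the $n_i$ terms via the degree-sum identity, should give $(n+3)/8$.

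The key structural input for the $V_4$ part mirrors Section \ref{Sec_struct}. I would study $G[V_4]$ (and $G[V_2\cup V_3\cup V_4]$), showing for instance that edges between low-degree vertices are mostly outer edges or lie in short fans. The goal is to show that $V_4$ admits, besides an independent set of size $n_4/3$, a large induced matching or induced cycle family, and that these can be chosen compatibly with the run-by-run choices of Theorem \ref{Tm_half}.

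The main obstacle is exactly this compatibility. A vertex of $V_3$ or $V_4$ in the combined set must have precisely the prescribed number of neighbours inside the set, whereas edges between runs of $V_2\cup V_3$ and $V_4$-vertices can exist, unlike the inter-run edges excluded by Corollary \ref{Cor_runs}. My first attempt would be to discard from the $V_4$-part every vertex adjacent to a chosen low-degree vertex. Each run has only boundedly many such neighbours, and $V_2$-vertices are adjacent only to their two run-endpoint neighbours, so this should cost at most a constant fraction. I would then tune the averaging weights so that this loss is absorbed; the $-3$ in $(7n-3)/8$ suggests the $-6$ from the degree sum propagates through the averaging exactly.
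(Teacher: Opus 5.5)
Your reduction to $\xi_{or}(G)\geq(n+3)/8$, your treatment of $n\leq4$, the identity $\sum_{i\geq5}(i-4)n_i=2n_2+n_3-6$ and the resulting estimate $(n+6)/9$ are all correct. However, the proof stops exactly where the remaining work lies. You assert that a weighted average over your three ``mixed'' OR-sets gives $(n+3)/8$, but this is never carried out. None of those sets is constructed, and no structural fact about $G[V_4]$ is proved. Nor is such a fact cheaply available: chords joining two vertices of $V_4$ already occur for $n=6$, so the run structure of Section~\ref{Sec_struct} does not transfer. You also leave the compatibility between the $V_4$-part and the run-by-run choices unresolved. As it stands this is a programme, not a proof. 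Its premise, that the missing gain must come from new non-independent or mixed OR-sets involving $V_4$, is also mistaken.

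What you are missing is an OR-set you already have. By Observations~\ref{Obs_F4} and~\ref{Obs_reg}, $V_2$ itself is a non-empty independent set, hence an OR-set of out-degree $2$. So $n_2\leq\xi$, where $\xi=\xi_{or}(G)$. The constant was lost in your step $2n_2+n_3\leq2s$, which discards the extra weight that $V_2$ carries in the degree count. Without that step your own computation gives $n_4\geq n+6-3n_2-2n_3$. Using $n_2\leq\xi$ and Theorem~\ref{Tm_half}, this becomes
\[
3\xi\;\geq\;n_4\;\geq\;n+6-\bigl(n_2+2(n_2+n_3)\bigr)\;\geq\;n+6-(\xi+4\xi).
\]
Hence $\xi\geq(n+6)/8$, slightly more than required.

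The paper's proof is the same bookkeeping with weights $6-i$ in place of your (implicit) $5-i$. From $\sum_i(6-i)n_i=2n+6$ it gets $4n_2+3n_3+2n_4+n_5\geq2n+6$. It then bounds the left side by $n_2+3(n_2+n_3)+2n_4+n_5\leq\xi+6\xi+6\xi+3\xi=16\xi$, additionally using an independent set of size at least $n_5/3$ in $V_5$, and obtains $\xi\geq(n+3)/8$. In both versions the elementary bounds $n_2\leq\xi$, $n_2+n_3\leq2\xi$ and $n_i\leq3\xi$ suffice. The $V_4$ structure theory and the compatibility analysis you propose are not needed.
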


\begin{proof}
For $n\in\{3,4\}$, $\mathrm{fd}(G)=1$ and the claim holds. So let $n\geq5$
and put $\xi=\xi_{or}(G)$. We record three lower bounds on $\xi$.

By Observation \ref{Obs_F4}, $V_{2}$ is a non-empty independent set, hence an
OR-set of out-degree $2$ by Observation \ref{Obs_reg}, so
\[
n_{2}\leq\xi.\tag{i}
\]
By Theorem \ref{Tm_half},
\[
n_{2}+n_{3}\leq2\xi.\tag{ii}
\]
For every $i\geq4$ with $n_{i}\geq1$, since $G$ is $3$-colorable
(Observation \ref{Obs_F1}), $V_{i}$ contains an independent set of size at
least $n_{i}/3$, which is an OR-set of out-degree $i\geq1$ by Observation
\ref{Obs_reg}, so
\[
n_{i}\leq3\xi\qquad(i\geq4).\tag{iii}
\]

Counting vertices and degrees, $\sum_{i\geq2}n_{i}=n$ and $\sum_{i\geq
2}i\,n_{i}=4n-6$ by Observation \ref{Obs_F1}, so
\[
\sum_{i\geq2}(6-i)\,n_{i}=6n-(4n-6)=2n+6,
\]
that is,
\[
4n_{2}+3n_{3}+2n_{4}+n_{5}=2n+6+\sum_{i\geq7}(i-6)n_{i}\geq2n+6.\tag{1}\label{For_count}
\]
On the other hand, using $4n_{2}+3n_{3}+2n_{4}+n_{5}=n_{2}+3(n_{2}%
+n_{3})+2n_{4}+n_{5}$ together with (i)--(iii),
\[
4n_{2}+3n_{3}+2n_{4}+n_{5}\leq\xi+6\xi+6\xi+3\xi=16\,\xi.
\]
Combining this with (\ref{For_count}) gives $16\xi\geq2n+6$, that is,
$\xi\geq(n+3)/8$. By Proposition \ref{Prop_20}, $\mathrm{fd}(G)=n-\xi\leq
n-(n+3)/8=(7n-3)/8$.
\end{proof}

Since $7/8=0.875<17/19=0.8947\ldots$, Theorem \ref{Tm_main} improves the
bound of \cite{CaroHansbergHenning} and, in particular, shows that
$\mathrm{fd}(G)/n$ does \emph{not} tend to $17/19$ as $n\rightarrow\infty$.

\begin{remark}
\label{Rem_bottleneck}The proof of Theorem \ref{Tm_main} only sharpens the
counting argument of \cite{CaroHansbergHenning} for the classes $V_{2}$ and
$V_{3}$; bound (iii) for $i\in\{4,5\}$ is untouched, and it is the
bottleneck. If (iii) could be replaced by $n_{i}\leq\xi/\rho$ for $i\in
\{4,5\}$ and some $\rho>1/3$, the same two lines of computation would give
\[
\mathrm{fd}(G)\leq n-\frac{2n+6}{7+3/\rho}.\tag{2}\label{For_rho}
\]
For instance, (\ref{For_rho}) with $\rho=1/2$ gives $\mathrm{fd}%
(G)<11n/13=0.8461\ldots n$, and with $\rho=2/5$ it gives $\mathrm{fd}%
(G)<25n/29=0.8620\ldots n$. Since $V_{4}$ and
$V_{5}$ need not induce a union of subpaths of $C$ (chords between two
vertices of $V_{4}$ are possible, already for $n=6$), the method of Section
\ref{Sec_struct} does not carry over directly. The complementary question of
when (\ref{For_count}) is tight is taken up in Remark \ref{Rem_delta6}.
\end{remark}

%------------------
\section{A lower bound}\label{Sec_lower}

We now show that Theorem \ref{Tm_main} is not far from best possible, by
exhibiting a family $H_{m}$ of \MOPs with $\mathrm{fd}(H_{m})/n\rightarrow7/9$
(Theorem \ref{Tm_strip}). The family was found by the computer search
described in Remark \ref{Rem_search} below; it has bounded maximum degree,
$\Delta(H_{m})=7$, and it is obtained by repeatedly gluing a fixed gadget (a certain triangulation of the convex 11-gon) along a strip.

%------
\subsection{\texorpdfstring{Construction of the gadget and the family $H_m$}%
{Construction of the gadget and the family Hm}}
Let $\Gamma$ be the triangulation of the $11$-gon $w_{0}w_{1}\cdots w_{10}$
with the eight chords
\[
w_{1}w_{10},\quad w_{2}w_{4},\quad w_{2}w_{10},\quad w_{4}w_{8},\quad
w_{4}w_{9},\quad w_{4}w_{10},\quad w_{5}w_{8},\quad w_{6}w_{8}%
\]
(Figure \ref{Fig_Hm}(a)); the degrees of $w_{0},\ldots,w_{10}$ in $\Gamma$
are $2,3,4,2,6,3,3,2,5,3,5$. For $m\geq2$, let $H_{m}$ be the graph obtained
from $m$ copies $\Gamma^{0},\ldots,\Gamma^{m-1}$ of $\Gamma$, with vertices
$w_{j}^{i}$, by identifying, for $0\leq i\leq m-2$, the outer edge
$w_{5}^{i}w_{6}^{i}$ of $\Gamma^{i}$ with the outer edge $w_{0}^{i+1}%
w_{10}^{i+1}$ of $\Gamma^{i+1}$, so that
\[
w_{5}^{i}=w_{0}^{i+1}\qquad\text{and}\qquad w_{6}^{i}=w_{10}^{i+1}%
\qquad(0\leq i\leq m-2).
\]
Gluing two triangulated polygons along an outer edge of each gives a
triangulated polygon, so $H_{m}$ is a \MOP. Its vertex set is the disjoint
union of $\{w_{0}^{0},w_{10}^{0}\}$ and the $m$ \emph{blocks} $B_{i}%
=\{w_{1}^{i},\ldots,w_{9}^{i}\}$, so $n=9m+2$. (In Figure
\ref{Fig_Hm}(b) the vertices are numbered along the outer cycle:
$w_{0}^{i},\ldots,w_{5}^{i}$ are $5i,\ldots,5i+5$ and $w_{6}^{i},\ldots
,w_{9}^{i}$ are $n-4i-5,\ldots,n-4i-2$, so a block consists of five
consecutive vertices on one side of the strip and four on the other. Note that
$w_{0}^{0}$ and $w_{10}^{0}$, the two vertices lying in no block, are the ends
of the outer edge closing the strip at its left, and $w_{5}^{m-1}w_{6}^{m-1}$
closes it at its right.)

\begin{figure}[!ht]
\centering
%--- (a) the gadget Gamma, a triangulation of the convex 11-gon -------------
\begin{tikzpicture}[scale=0.88]
\coordinate (g0) at (0.000,2.000);
\coordinate (g1) at (1.081,1.683);
\coordinate (g2) at (1.819,0.831);
\coordinate (g3) at (1.980,-0.285);
\coordinate (g4) at (1.511,-1.310);
\coordinate (g5) at (0.563,-1.919);
\coordinate (g6) at (-0.563,-1.919);
\coordinate (g7) at (-1.511,-1.310);
\coordinate (g8) at (-1.980,-0.285);
\coordinate (g9) at (-1.819,0.831);
\coordinate (g10) at (-1.081,1.683);
\draw[thick] (g0) -- (g1) -- (g2) -- (g3) -- (g4) -- (g5) -- (g6) -- (g7) --
  (g8) -- (g9) -- (g10) -- cycle;
\foreach \a/\b in {1/10,2/4,2/10,4/8,4/9,4/10,5/8,6/8}
  { \draw[blue!70!black,thick] (g\a) -- (g\b); }
\draw[red!70!black,very thick] (g10) -- (g0);
\draw[red!70!black,very thick] (g5) -- (g6);
\foreach \i in {0,...,10} { \fill (g\i) circle (1.8pt); }
% vertex names, and (in gray) the degree of the vertex in Gamma
\foreach \i/\d in {0/2,1/3,2/4,3/2,4/6,5/3,6/3,7/2,8/5,9/3,10/5}
  { \node[font=\small] at ($1.21*(g\i)$) {$w_{\i}$};
    \node[font=\scriptsize,gray!45!black] at ($1.44*(g\i)$) {\d}; }
\node[font=\small] at (0,-3.35) {(a) the gadget $\Gamma$};
\end{tikzpicture}

\vspace{2mm}

%--- (b) H_3 drawn as the strip it is --------------------------------------
\begin{tikzpicture}[x=0.92cm,y=0.86cm]
% upper row (w_0^i ... w_5^i), left to right
\coordinate (v0) at (0.000,3.000);
\coordinate (v1) at (1.000,3.073);
\coordinate (v2) at (2.000,3.142);
\coordinate (v3) at (3.000,3.566);
\coordinate (v4) at (4.000,3.260);
\coordinate (v5) at (5.000,3.303);
\coordinate (v6) at (6.000,3.333);
\coordinate (v7) at (7.000,3.348);
\coordinate (v8) at (8.000,3.708);
\coordinate (v9) at (9.000,3.333);
\coordinate (v10) at (10.000,3.303);
\coordinate (v11) at (11.000,3.260);
\coordinate (v12) at (12.000,3.206);
\coordinate (v13) at (13.000,3.502);
\coordinate (v14) at (14.000,3.073);
\coordinate (v15) at (15.000,3.000);
% lower row (w_6^i ... w_9^i and w_10^0), right to left
\coordinate (v16) at (15.000,0.000);
\coordinate (v17) at (13.750,-0.451);
\coordinate (v18) at (12.500,-0.175);
\coordinate (v19) at (11.250,-0.247);
\coordinate (v20) at (10.000,-0.303);
\coordinate (v21) at (8.750,-0.698);
\coordinate (v22) at (7.500,-0.350);
\coordinate (v23) at (6.250,-0.338);
\coordinate (v24) at (5.000,-0.303);
\coordinate (v25) at (3.750,-0.607);
\coordinate (v26) at (2.500,-0.175);
\coordinate (v27) at (1.250,-0.091);
\coordinate (v28) at (0.000,0.000);
% the outer cycle 0 1 ... 28 0
\draw[thick] (v0) -- (v1) -- (v2) -- (v3) -- (v4) -- (v5) -- (v6) -- (v7) --
  (v8) -- (v9) -- (v10) -- (v11) -- (v12) -- (v13) -- (v14) -- (v15) -- (v16)
  -- (v17) -- (v18) -- (v19) -- (v20) -- (v21) -- (v22) -- (v23) -- (v24)
  -- (v25) -- (v26) -- (v27) -- (v28) -- cycle;
% the 24 chords of the three copies of Gamma
\foreach \a/\b in {1/28,2/28,4/26,4/27,4/28,5/26,
  6/24,7/24,9/22,9/23,9/24,10/22,
  11/20,12/20,14/18,14/19,14/20,15/18}
  { \draw[blue!70!black,thick] (v\a) -- (v\b); }
% the chords w_2^i w_4^i and w_6^i w_8^i, bent clear of the degree-2 vertex
% w_3^i resp. w_7^i that they skip over
\foreach \a/\b in {2/4,7/9,12/14,24/26,20/22,16/18}
  { \draw[blue!70!black,thick] (v\a) to[bend right=14] (v\b); }
% the two glued edges
\draw[red!70!black,very thick] (v5) -- (v24);
\draw[red!70!black,very thick] (v10) -- (v20);
\foreach \i in {0,...,28} { \fill (v\i) circle (2.0pt); }
% the eight vertices of Q_2
\foreach \i in {0,3,8,13,15,16,21,25}
  { \draw[black,line width=1.0pt] (v\i) circle (5pt); }
% labels: upper row above, lower row below
\foreach \i in {0,...,15} { \node[font=\scriptsize,inner sep=1pt,above=8.5pt] at (v\i) {\i}; }
\foreach \i in {16,...,28} { \node[font=\scriptsize,inner sep=1pt,below=8.5pt] at (v\i) {\i}; }
% braces marking the three blocks
\foreach \s/\e/\lb in {1/5/0,6/10/1,11/15/2}
  { \draw[gray!55!black,decorate,decoration={brace,amplitude=4pt,mirror,raise=2pt}]
      (\s,-1.32) -- (\e,-1.32);
    \node[font=\small,gray!45!black] at ({(\s+\e)/2},-2.00) {$B_{\lb}$}; }
\node[font=\small] at (7.5,-2.85) {(b) the \MOP $H_{3}$, drawn as a strip};
\end{tikzpicture}
\caption{(a) The gadget $\Gamma$, a triangulation of the convex $11$-gon: blue
edges are its eight chords, the red outer edges $w_{10}w_{0}$ and $w_{5}w_{6}$
are those along which consecutive copies are glued, and the small gray number
at each vertex is its degree in $\Gamma$. (b) The \MOP $H_{3}$ ($m=3$,
$n=29$), drawn as the strip that it is: each of the three copies of $\Gamma$
contributes five vertices to the upper row and four to the lower one, and the
braces mark the blocks $B_{0},B_{1},B_{2}$. Vertices carry their number on the
outer cycle (black edges), under the numbering rule stated above; blue edges
are the chords of the three copies, and the two vertical red edges are the
glued ones, $\{5,24\}=w_{5}^{0}w_{6}^{0}=w_{0}^{1}w_{10}^{1}$ and
$\{10,20\}=w_{5}^{1}w_{6}^{1}=w_{0}^{2}w_{10}^{2}$. The eight encircled
vertices are the OR-set $Q_{2}$ of out-degree $2$; note its periodicity in the
first $m-1$ blocks and the different pattern in the last one.}
\label{Fig_Hm}
\end{figure}

An identified vertex has the neighbors of both of its copies: $d(w_{5}%
^{i})=3+2-1=4$ and $d(w_{6}^{i})=3+5-1=7$ for $i\leq m-2$, while $w_{5}%
^{m-1}$ and $w_{6}^{m-1}$ keep their degree $3$; furthermore $d(w_{0}^{0})=2$
and $d(w_{10}^{0})=5$. Hence the degrees of $w_{1}^{i},\ldots,w_{9}^{i}$ are
\[
3,\ 4,\ 2,\ 6,\ 4,\ 7,\ 2,\ 5,\ 3\quad(0\leq i\leq m-2)\quad\text{and}%
\quad3,\ 4,\ 2,\ 6,\ 3,\ 3,\ 2,\ 5,\ 3\quad(i=m-1),
\]
so that $\Delta(H_{m})=7$ and
\[
n_{2}=2m+1,\quad n_{3}=2m+2,\quad n_{4}=2m-1,\quad n_{5}=m+1,\quad
n_{6}=m,\quad n_{7}=m-1
\]
with $\sum_{i}i\,n_{i}=36m+2=4n-6$, as it must be. Notice that every
degree class has at most two vertices in each block $B_{i}$ with $i\leq m-2$
(in the last block $w_{1}^{m-1},w_{5}^{m-1},w_{6}^{m-1},w_{9}^{m-1}$ all have
degree $3$, which is what causes the exceptional bound $\left\vert Q\cap
B_{m-1}\right\vert \leq3$ in Lemma \ref{Lemma_blocks}): the degrees are spread
out, and this is what keeps every OR-set small.

For a block $B_{i}$ we abbreviate $w_{j}=w_{j}^{i}$ and $w_{j}^{+}%
=w_{j}^{i+1}$, and we recall that $w_{0}^{i}=w_{5}^{i-1}$ and $w_{10}%
^{i}=w_{6}^{i-1}$ for $i\geq1$. Reading off the chords of $\Gamma$ in the
two copies meeting at $w_{5}^{i},w_{6}^{i}$, the neighborhoods in $H_{m}$
are
\[
N(w_{1})=\{w_{0},w_{2},w_{10}\},\quad N(w_{2})=\{w_{1},w_{3},w_{4}%
,w_{10}\},\quad N(w_{3})=\{w_{2},w_{4}\},
\]
\[
N(w_{4})=\{w_{2},w_{3},w_{5},w_{8},w_{9},w_{10}\},\quad N(w_{5}%
)=\{w_{4},w_{6},w_{8},w_{1}^{+}\},
\]
\[
N(w_{6})=\{w_{5},w_{7},w_{8},w_{1}^{+},w_{2}^{+},w_{4}^{+},w_{9}^{+}%
\},\quad N(w_{7})=\{w_{6},w_{8}\},
\]
\[
N(w_{8})=\{w_{4},w_{5},w_{6},w_{7},w_{9}\},\quad N(w_{9})=\{w_{4}%
,w_{8},w_{10}\},
\]
where for $i=m-1$ the vertices $w_{j}^{+}$ are absent, and $N(w_{0}%
^{0})=\{w_{1}^{0},w_{10}^{0}\}$, $N(w_{10}^{0})=\{w_{0}^{0},w_{1}^{0}%
,w_{2}^{0},w_{4}^{0},w_{9}^{0}\}$. In particular, for $i\geq1$ the vertex
$w_{10}^{i}=w_{6}^{i-1}$ has degree $7$ and $w_{0}^{i}=w_{5}^{i-1}$ has
degree $4$.

The set
\[
Q_{2}=\{w_{0}^{0}\}\cup\{w_{3}^{i},w_{7}^{i}:0\leq i\leq m-2\}\cup
\{w_{3}^{m-1},w_{5}^{m-1},w_{6}^{m-1}\}
\]
is an OR-set of out-degree $2$: its vertices of degree $2$ have no neighbor
in $Q_{2}$ (the neighbors of $w_{7}^{i}$ are $w_{6}^{i},w_{8}^{i}$, and
$w_{6}^{i}\notin Q_{2}$ for $i\leq m-2$), while the adjacent vertices
$w_{5}^{m-1},w_{6}^{m-1}$ of degree $3$ have exactly one neighbor in
$Q_{2}$, namely each other, as $w_{7}^{m-1}\notin Q_{2}$. Hence
$\xi_{or}(H_{m})\geq\left\vert Q_{2}\right\vert =2m+2$. The following lemma
shows that this is best possible: an OR-set contains at most two vertices of
each block, apart from a bounded boundary effect.

%-----------------------------------
\subsection{\texorpdfstring{Proving $\xi_{or}(H_{m})=2m+2$}%
{Proving xi\_or(Hm) = 2m+2}}

\begin{lemma}
\label{Lemma_blocks}Let $m\geq2$ and let $Q$ be a non-empty OR-set of $H_{m}$
of out-degree $k$. Then $k\geq2$, that is, $H_{m}$ has no non-empty OR-set of
out-degree $1$; moreover $\left\vert Q\cap B_{i}\right\vert \leq2$ for every
$0\leq i\leq m-1$, except that $\left\vert Q\cap B_{m-1}\right\vert \leq3$
when $k\in\{2,3\}$, while $w_{0}^{0}\in Q$ only if $k=2$ and $w_{10}^{0}\in Q$
only if $k\in\{4,5\}$. Consequently $\left\vert Q\right\vert \leq2m+2$, with
equality possible only for $k=2$. For each individual value of $k$ the proof
in fact yields the sharper bound on $\left\vert Q\right\vert $ collected in
the table at its beginning.
\end{lemma}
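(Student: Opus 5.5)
The plan is a case analysis on the out-degree $k$, using only the explicit neighbourhoods listed above. The basic fact is that every vertex $u\in Q$ satisfies $d(u)-|N(u)\cap Q|=k$. Hence $Q\subseteq\bigcup_{i\ge k}V_i$. Moreover a vertex of degree $i$ in $Q$ has exactly $i-k$ neighbours in $Q$. So for each $k$ the degree classes allowed in $Q$, and the number of $Q$-neighbours each must have, are fixed. I would first tabulate, for each block $B_i$ with $i\le m-2$ and separately for $B_{m-1}$, which positions $w_j^i$ lie in each degree class, using the degree lists $3,4,2,6,4,7,2,5,3$ and $3,4,2,6,3,3,2,5,3$. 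The table promised at the start of the proof would then list, for $k=1,\dots,7$, the sharper bound on $|Q|$ obtained in each case.

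\textbf{Out-degree $1$.} Every $u\in Q$ has all but one neighbour in $Q$. I would rule this out by a propagation argument. A degree-$2$ vertex $w_3$ in $Q$ forces exactly one of $w_2,w_4$ into $Q$. A degree-$3$ vertex forces two of its three neighbours into $Q$, and so on. Take any $u\in Q$ of minimum degree and follow the forced memberships through the neighbourhood lists. The forced set should grow until it contains some vertex $v$ (for instance a degree-$2$ vertex) all of whose neighbours are in $Q$. That $v$ would have out-degree $0$, a contradiction. An alternative is a counting argument: the vertices of $Q$ would have almost all their edges inside $Q$, and $2$-degeneracy (Observation~\ref{Obs_F1}) combined with the bounded structure of the strip should make this impossible.

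\textbf{Out-degree $k\ge 2$.} For each $k$ I would bound $|Q\cap B_i|$. Outside the last block every degree class meets a block in at most two vertices, so for $k\ge 4$ the classes $V_i$ with $i\ge k$ meet each block only in small, explicitly known positions. For example, $V_6\cup V_7$ gives just $w_4,w_6$ in each block. For each admissible degree one then checks the required number of $Q$-neighbours against the adjacencies between these positions and those of adjacent blocks. This shows that at most two can be chosen, or gives a sharper bound.

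For $k=2$ the candidates are: degree-$2$ vertices with no $Q$-neighbour, degree-$3$ vertices with one, degree-$4$ vertices with two, and so on. I would argue block by block with a discharging or local-exchange step. If three vertices of $B_i$ are in $Q$, the constraints on $w_2,w_4,w_5,w_6,w_8$ (whose neighbourhoods overlap heavily) should force a vertex of $Q$ to have the wrong number of $Q$-neighbours. The case $k=3$ is handled the same way.

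In the last block, the three degree-$3$ vertices $w_5,w_6,w_9$ give the extra room, hence the bound $|Q\cap B_{m-1}|\le 3$ for $k\in\{2,3\}$. The conditions on $w_0^0$ (degree $2$, so $k=2$) and on $w_{10}^0$ (degree $5$, with $Q$-neighbours among $w_0^0,w_1^0,w_2^0,w_4^0,w_9^0$) are read off directly.

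\textbf{Summing.} Adding the block bounds gives $|Q|\le 2m+1+[k\le 3]+[w_0^0\in Q]+[w_{10}^0\in Q]$. Since $w_0^0$ and $w_{10}^0$ require $k=2$ and $k\in\{4,5\}$ respectively, they cannot both lie in $Q$, which yields $|Q|\le 2m+2$. Equality requires both the $B_{m-1}$ bonus and $w_0^0$, hence $k=2$.

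\textbf{Main obstacle.} The hardest step is the case $k=2$ (and to a lesser extent $k=3$) for interior blocks. There many degree classes are admissible, and the constraints couple neighbouring blocks through $w_6^i$, which is adjacent to $w_1^+,w_2^+,w_4^+,w_9^+$. A purely local count per block may therefore fail. My first attempt would be to charge each vertex of $Q\cap B_i$ that has a neighbour in $B_{i+1}$ partly to $B_{i+1}$, and verify a combined bound of $\le 4$ on consecutive pairs of blocks. Failing that, I would enumerate the finitely many local configurations over a window of two blocks, which is a bounded check.
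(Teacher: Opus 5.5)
Your outline has the same skeleton as the paper's proof. You split by $k$, use that each $u\in Q$ has exactly $r(u)=d(u)-k$ neighbours in $Q$, bound $Q\cap B_i$ block by block, handle $w_0^0,w_{10}^0$ separately, and add up. The gap is that every load-bearing step is left as ``should'', so none of the actual claims is established:
\begin{itemize}
\item that $k=1$ is impossible;
\item that a block meets $Q$ in at most two vertices, which for $k\le5$ is not visible from degrees;
\item that $w_{10}^0\notin Q$ for $k\in\{2,3\}$, which cannot be ``read off directly'' since $d(w_{10}^0)=5\ge k$.
\end{itemize}

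The missing tool is the elimination rule the paper calls (P0)/(P1). A $Q$-neighbour $v$ of some $u\in Q$ itself has a $Q$-neighbour, so $d(v)\ge k+1$. Hence vertices of degree exactly $k$ are isolated in $G[Q]$. A vertex with fewer than $r(u)$ such admissible neighbours is excluded, and one with exactly $r(u)$ of them forces all of them into $Q$. Applied in a fixed order, this gives pointwise containments rather than counts, so no discharging is needed.

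For $k=2$, take $w_6^i$ with $i\le m-2$. It would need five of $w_5,w_8,w_1^+,w_2^+,w_4^+,w_9^+$ in $Q$. Since $w_9^+\in Q$ forces $w_4^+\notin Q$, in every case $w_5,w_1^+,w_2^+\in Q$, and then $w_1^+$ has three $Q$-neighbours instead of one. After that, $w_4$, $w_8$, $w_2$ (together with $w_{10}^0$), $w_5$ and finally $w_1,w_9$ are excluded in turn. This leaves $Q\cap B_i\subseteq\{w_3^i,w_7^i\}$; for $k=3$ an analogous sequence leaves $\{w_1^i,w_9^i\}$. So the coupling through $w_6^i$ that you single out as the main obstacle is exactly what excludes $w_6^i$. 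Your fallback of bounding consecutive pairs of blocks by $4$ would not give the per-block inequality $|Q\cap B_i|\le2$ that the lemma asserts.

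Several specific items would also fail or need correcting.
\begin{itemize}
\item For $k=1$, the appeal to $2$-degeneracy cannot work. Fans are $2$-degenerate \MOPs with an OR-set of out-degree $1$ (see the proof of Theorem~\ref{Tm_half}), so the specific adjacencies of $H_m$ must enter. Your propagation from a minimum-degree vertex branches, and you never resolve the branching. The paper starts at $w_4$ instead: whichever of $w_2,w_3$ is its unique outside neighbour, $w_8,w_9,w_{10}\in Q$, so $w_9$ has no outside neighbour. Then $w_3,w_9,w_8,w_7,w_6,w_{10}^0,w_2,w_5,w_1,w_0^0$ are excluded successively.
\item The last block has four vertices of degree $3$ ($w_1,w_5,w_6,w_9$), not three. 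Its extra room comes from the adjacent pair $w_5^{m-1},w_6^{m-1}$. For $k=2$ both may lie in $Q$ as an induced edge, which then excludes $w_7^{m-1}$. For $k=3$ both have $r=0$, so at most one of them does.
\item Your sum should read $|Q|\le 2m+[k\in\{2,3\}]+[w_0^0\in Q]+[w_{10}^0\in Q]$. With the extra $+1$ you wrote, the case $k=2$ would only give $2m+3$.
\item The per-$k$ table that the statement refers to ($m-1,m,m+1,2m,2m+1,2m+2,0$ for $k=7,\dots,1$) is never derived.
\end{itemize}
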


\noindent\textit{(Here and below, blocks are indexed $B_{0},B_{1},\ldots
,B_{m-1}$. Only the last block $B_{m-1}$ has a degree sequence differing from
that of the others; the index $i=0$ is special only in that the two vertices
$w_{0}^{0},w_{10}^{0}$, which lie in no block at all, have degrees $2$ and $5$
in place of the $4$ and $7$ of $w_{5}^{i},w_{6}^{i}$ for $i\leq m-2$.)}

\begin{proof}
As a road map, the seven cases below --- treated in decreasing order of $k$,
there being nothing to prove for $k\geq8$ since $\Delta(H_{m})=7$ --- will
establish exactly the following bounds, which we then add up.

\begin{center}
\begin{tabular}{r|ccccccc}
$k$ & $7$ & $6$ & $5$ & $4$ & $3$ & $2$ & $1$\\\hline
$\left\vert Q\right\vert \leq$ & $m-1$ & $m$ & $m+1$ & $2m$ & $2m+1$ & $2m+2$
& $0$\\
\end{tabular}
\end{center}

\noindent Since $m\geq2$, the largest of these seven values is $2m+2$, reached
only at $k=2$, and it is reached, by the set $Q_{2}$ exhibited above.

Throughout, $Q$ is an OR-set of out-degree $k$, so every $u\in Q$ has exactly
$k$ neighbors outside $Q$ and hence exactly $r(u)=d(u)-k$ neighbors inside
$Q$. In particular $d(u)\geq k$; since $\delta(H_{m})=2$ (as $H_{m}$ is a
\MOP) we also have $d(u)\geq2$, so
\[
d(u)\geq\max\{k,2\}\qquad(u\in Q).
\]
We use two facts over and over.

\medskip

\noindent
(P0)\ \ If $u\in Q$ and $r(u)=0$, then no neighbor of $u$ lies in $Q$, all
$k$ neighbors of $u$ being outside $Q$; hence a vertex with $r=0$, that is, a
vertex of degree exactly $k$, can never serve as a $Q$-neighbor of another
vertex of $Q$.

\medskip

\noindent
(P1)\ \ Call a neighbor $v$ of a vertex $u$ \emph{admissible} if $d(v)\geq
k+1$ and $v$ has not already been shown to lie outside $Q$. If $u\in Q$, then
each of the $r(u)$ neighbors of $u$ lying in $Q$ is admissible: such a $v$
satisfies $d(v)\geq k$, and $v$ has the $Q$-neighbor $u$, so $r(v)\geq1$,
that is, $d(v)\geq k+1$ -- which is (P0) read contrapositively. Note that
admissibility is relative to the stage the argument has reached: the set of
admissible neighbors of a vertex can only shrink as further vertices are
shown to lie outside $Q$.

\medskip

Applying (P0) and (P1) repeatedly we remove vertices from a potential $k$-OR
set; those that will be left in $Q$ must have $k$ neighbors outside $Q$ and,
inside $Q$, exactly $r(u)$ neighbors, each of them admissible. Thus, once a
vertex $u$ has fewer than $r(u)$ admissible neighbors,
$u\notin Q$. We apply (P1) in each block $B_{i}$ in a fixed order, always to a
vertex whose neighbors have already been dealt with, distinguishing where
necessary between the blocks $i\leq m-2$ and $i=m-1$ (which differ in
$d(w_{5}),d(w_{6})$) and between $i\geq1$ and $i=0$ (which differ in
$d(w_{0}),d(w_{10})$). When the count is tight, that is, when a vertex $u\in
Q$ has exactly $r(u)$ admissible neighbors, we use the stronger conclusion
that then \emph{all} of them must lie in $Q$, and read off a contradiction at
a further vertex; this is how items (1) of Case $k=3$ and (1), (2), (4) of
Case $k=2$ proceed.

\medskip

\noindent\textbf{Case $k=7$.} Only the vertices $w_{6}^{i}$ with $i\leq m-2$ have
degree $7$, so $Q\subseteq\{w_{6}^{i}:i\leq m-2\}$. Here, as everywhere below,
the block index $i$ starts from $0$, so this set has $m-1$ elements. It cannot
be enlarged: no vertex of $B_{m-1}$ has degree $7$ (the degrees there are
$3,4,2,6,3,3,2,5,3$), and the two vertices outside all blocks have
$d(w_{0}^{0})=2$ and $d(w_{10}^{0})=5$. Hence $\left\vert Q\right\vert \leq
m-1$.

\medskip

\noindent\textbf{Case $k=6$.} The candidates are $w_{4}$ ($r=0$) and, for $i\leq
m-2$, $w_{6}$ ($r=1$). The only neighbor of $w_{6}$ of degree at least $6$ is
$w_{4}^{+}$, which is excluded as a $Q$-neighbor by (P0) (here we filter by
$d\geq k$ and then invoke (P0), which amounts to the filter $d\geq k+1$ of
(P1)); so $w_{6}\notin Q$
and $Q\subseteq\{w_{4}^{i}\}$. Again $i$ starts from $0$, and here $i$ runs all
the way to $m-1$: since $d(w_{4}^{m-1})=6$, the vertex $w_{4}^{m-1}$ is a
candidate too, so $Q$ may well meet $B_{m-1}$. As $d(w_{0}^{0})=2$ and
$d(w_{10}^{0})=5$ are both smaller than $6$, we get $\left\vert Q\right\vert
\leq m$.

\medskip

\noindent\textbf{Case $k=5$.} The candidates are $w_{4}$ ($r=1$), $w_{8}$ ($r=0$),
$w_{6}$ with $i\leq m-2$ ($r=2$), and $w_{10}^{0}$ ($r=0$). The neighbors of
$w_{6}$ of degree at least $5$ are $w_{8}$, excluded by (P0), and $w_{4}^{+}%
$; so $w_{6}$ has at most one admissible neighbor and $w_{6}\notin Q$, whence
$w_{10}^{i}\notin Q$ for $i\geq1$. The neighbors of $w_{4}$ of degree at
least $5$ are $w_{8}$, excluded by (P0), and $w_{10}$, which is not in $Q$
for $i\geq1$ and is excluded by (P0) for $i=0$; so $w_{4}\notin Q$. Hence
$Q\subseteq\{w_{8}^{i}\}\cup\{w_{10}^{0}\}$. Once more $i$ starts from $0$ and
runs to $m-1$, the vertex $w_{8}^{m-1}$ of degree $5$ being a candidate as
well, so the extension to $B_{m-1}$ is possible; no further vertex of
$B_{m-1}$ qualifies, as $d(w_{5}^{m-1})=d(w_{6}^{m-1})=3<5$. The $m+1$
vertices listed are pairwise non-adjacent and all have $r=0$, so $\left\vert
Q\right\vert \leq m+1$.

\medskip

\noindent\textbf{Case $k=4$.} The candidates in $B_{i}$ are $w_{2}$ ($r=0$),
$w_{4}$ ($r=2$), $w_{8}$ ($r=1$), and for $i\leq m-2$ also $w_{5}$ ($r=0$)
and $w_{6}$ ($r=3$); furthermore $w_{10}^{0}$ ($r=1$). The admissible
neighbors of $w_{6}$ are among $w_{5},w_{8},w_{2}^{+},w_{4}^{+}$, and
$w_{5},w_{2}^{+}$ are excluded by (P0), leaving two, fewer than $r(w_{6})=3$;
so $w_{6}\notin Q$ and $w_{10}^{i}\notin Q$ for $i\geq1$. The admissible
neighbors of $w_{4}$ are among $w_{8}$ and $w_{10}$, since $w_{2},w_{5}$ are
excluded by (P0); for $i\geq1$ this leaves only $w_{8}$, so $w_{4}^{i}\notin
Q$, and then $w_{8}^{i}\notin Q$ as well: its only neighbors of degree at
least $k+1=5$ are $w_{4}$, just excluded, and $w_{6}$, excluded above for
$i\leq m-2$ and of degree $3<5$ --- hence not admissible at all --- for
$i=m-1$, so $w_{8}^{i}$ has no admissible neighbor, fewer than $r(w_{8})=1$.
For $i=0$, the only admissible
neighbor of $w_{8}^{0}$ is $w_{4}^{0}$, the only one of $w_{10}^{0}$ is
$w_{4}^{0}$ ($w_{2}^{0}$ being excluded by (P0)), and $w_{4}^{0}\in Q$ would
need both $w_{8}^{0},w_{10}^{0}\in Q$. Consequently $Q\cap B_{i}%
\subseteq\{w_{2}^{i},w_{5}^{i}\}$ for $i\geq1$, while for $i=0$ either
$w_{4}^{0}\in Q$, in which case its neighbors $w_{2}^{0},w_{5}^{0}$ are not
in $Q$ by (P0) and $Q\cap B_{0}\subseteq\{w_{4}^{0},w_{8}^{0}\}$, or
$w_{4}^{0}\notin Q$, in which case $w_{8}^{0},w_{10}^{0}\notin Q$ and $Q\cap
B_{0}\subseteq\{w_{2}^{0},w_{5}^{0}\}$.

Here too $i$ starts from $0$, and the count splits into the two sub-cases just
described. In the last block $w_{5}^{m-1}$ is not a candidate, as
$d(w_{5}^{m-1})=3<4$ (nor is $w_{6}^{m-1}$, of degree $3$), so the above
leaves $Q\cap B_{m-1}\subseteq\{w_{2}^{m-1}\}$. Since $d(w_{0}^{0})=2<4$, we
obtain $\left\vert Q\right\vert \leq2+1+2(m-2)+1=2m$ in the sub-case
$w_{4}^{0}\in Q$ (where $w_{10}^{0}$ may lie in $Q$) and $\left\vert
Q\right\vert \leq2+2(m-2)+1=2m-1$ in the sub-case $w_{4}^{0}\notin Q$. In
either case $\left\vert Q\right\vert \leq2m$.

\medskip

\noindent\textbf{Case $k=3$.} The candidates in $B_{i}$ are $w_{1},w_{9}$ ($r=0$),
$w_{2}$ ($r=1$), $w_{4}$ ($r=3$), $w_{8}$ ($r=2$), and $w_{5},w_{6}$ with
$r=1,4$ for $i\leq m-2$ and $r=0,0$ for $i=m-1$; furthermore $w_{10}^{0}$
($r=2$); the vertices $w_{3},w_{7},w_{0}^{0}$ have degree $2<k$.

\emph{(1) }$w_{6}\notin Q$ for $i\leq m-2$: its admissible neighbors are
among $w_{5},w_{8},w_{2}^{+},w_{4}^{+}$ ($w_{1}^{+},w_{9}^{+}$ are excluded
by (P0)), exactly $r(w_{6})=4$ of them, so all four would lie in $Q$; but
then $w_{5}$ would have the two $Q$-neighbors $w_{6},w_{8}$ instead of
$r(w_{5})=1$. Hence also $w_{10}^{i}\notin Q$ for $i\geq1$.

\emph{(2) }$w_{4}\notin Q$: its admissible neighbors are among $w_{2}%
,w_{5},w_{8},w_{10}$ ($w_{9}$ is excluded by (P0)). For $i\geq1$,
$w_{10}\notin Q$ by (1), so $w_{2},w_{5},w_{8}$ would all lie in $Q$; for
$i=m-1$ this contradicts (P0) at $w_{5}$, and for $i\leq m-2$ the vertex
$w_{5}$ would again have the two $Q$-neighbors $w_{4},w_{8}$. For $i=0$,
three of $w_{2}^{0},w_{5}^{0},w_{8}^{0},w_{10}^{0}$ would lie in $Q$; not
both $w_{5}^{0},w_{8}^{0}$ (as $w_{5}^{0}$ would have $Q$-degree at least
$2$), so $w_{2}^{0},w_{10}^{0}\in Q$, and then $w_{2}^{0}$ has the two
$Q$-neighbors $w_{4}^{0},w_{10}^{0}$ instead of one.

\emph{(3) }$w_{8}\notin Q$: by (2) and (P0) at $w_{9}$, its admissible
neighbors are among $w_{5},w_{6}$, and $w_{6}$ is not in $Q$ for $i\leq m-2$
by (1) and is excluded by (P0) for $i=m-1$; this leaves fewer than
$r(w_{8})=2$.

\emph{(4) }$w_{2}\notin Q$ and $w_{10}^{0}\notin Q$: the only possibly
admissible neighbor of $w_{2}$ is $w_{10}$ ($w_{1}$ by (P0), $w_{4}$ by (2)),
so $w_{2}^{i}\notin Q$ for $i\geq1$; and $w_{10}^{0}$ has at most the one
admissible neighbor $w_{2}^{0}$ ($w_{1}^{0},w_{9}^{0}$ by (P0), $w_{4}^{0}$
by (2)), fewer than $r(w_{10}^{0})=2$, so $w_{10}^{0}\notin Q$ and then
$w_{2}^{0}\notin Q$.

\emph{(5) }$w_{5}\notin Q$ for $i\leq m-2$: its neighbors $w_{4},w_{6},w_{8}$
are not in $Q$ and $w_{1}^{+}$ is excluded by (P0).

Hence $Q\cap B_{i}\subseteq\{w_{1}^{i},w_{9}^{i}\}$ for $i\leq m-2$, and
$Q\cap B_{m-1}\subseteq\{w_{1},w_{9},w_{5},w_{6}\}$, where the adjacent
vertices $w_{5}^{m-1},w_{6}^{m-1}$ both have $r=0$, so at most one of them
lies in $Q$ by (P0).

The index $i$ starts from $0$ again, and the analysis above is where the last
block genuinely differs: $\left\vert Q\cap B_{i}\right\vert \leq2$ for $i\leq
m-2$, while $\left\vert Q\cap B_{m-1}\right\vert \leq3$. Outside the blocks,
$w_{10}^{0}\notin Q$ by (4) and $d(w_{0}^{0})=2<3$, so neither vertex extends
$Q$. Hence $\left\vert Q\right\vert \leq2(m-1)+3=2m+1$.

\medskip

\noindent\textbf{Case $k=2$.} Now every vertex is a candidate: $r(w_{3}%
)=r(w_{7})=r(w_{0}^{0})=0$, $r(w_{1})=r(w_{9})=1$, $r(w_{2})=2$, $r(w_{4})=4$,
$r(w_{8})=3$, $r(w_{10}^{0})=3$, and $r(w_{5}),r(w_{6})$ equal $2,5$ for
$i\leq m-2$ and $1,1$ for $i=m-1$.

\emph{(1) }$w_{6}\notin Q$ for $i\leq m-2$: as $w_{7}$ is excluded by (P0),
five of the six vertices $w_{5},w_{8},w_{1}^{+},w_{2}^{+},w_{4}^{+},w_{9}^{+}$
would lie in $Q$. If $w_{9}^{+}\in Q$, then, since $r(w_{9}^{+})=1$ and
$w_{6}=w_{10}^{+}$ is already a $Q$-neighbor of $w_{9}^{+}$, its other
neighbors $w_{4}^{+},w_{8}^{+}$ are not in $Q$. In either case $w_{5}%
,w_{1}^{+},w_{2}^{+}\in Q$, and $w_{1}^{+}$ has the three $Q$-neighbors
$w_{5}=w_{0}^{+}$, $w_{2}^{+}$ and $w_{6}=w_{10}^{+}$ instead of $r(w_{1}%
^{+})=1$. Hence also $w_{10}^{i}\notin Q$ for $i\geq1$.

\emph{(2) }$w_{4}\notin Q$: $w_{3}$ is excluded by (P0). For $i\geq1$,
$w_{10}\notin Q$ by (1), so $w_{2},w_{5},w_{8},w_{9}\in Q$, and $w_{9}$ has
the two $Q$-neighbors $w_{4},w_{8}$ instead of one. For $i=0$, four of
$w_{2}^{0},w_{5}^{0},w_{8}^{0},w_{9}^{0},w_{10}^{0}$ lie in $Q$; if
$w_{9}^{0}\in Q$ then $w_{8}^{0},w_{10}^{0}\notin Q$ (as $w_{4}^{0}$ is
already its $Q$-neighbor), leaving only three, so $w_{9}^{0}\notin Q$ and
$w_{2}^{0},w_{5}^{0},w_{8}^{0},w_{10}^{0}\in Q$; but then $w_{8}^{0}$ needs
three $Q$-neighbors among $w_{4}^{0},w_{5}^{0},w_{6}^{0}$ ($w_{7}^{0}$ by
(P0), $w_{9}^{0}\notin Q$), forcing $w_{6}^{0}\in Q$, contrary to (1).

\emph{(3) }$w_{8}\notin Q$: its admissible neighbors are among $w_{5}%
,w_{6},w_{9}$ ($w_{7}$ by (P0), $w_{4}$ by (2)). For $i\leq m-2$, $w_{6}%
\notin Q$ leaves two, fewer than $r(w_{8})=3$. For $i=m-1$, all of
$w_{5},w_{6},w_{9}$ would lie in $Q$, and $w_{6}^{m-1}$ would have the two
$Q$-neighbors $w_{5},w_{8}$ instead of one.

\emph{(4) }$w_{2}\notin Q$ and $w_{10}^{0}\notin Q$: $w_{2}$ needs the two
$Q$-neighbors $w_{1},w_{10}$ ($w_{3}$ by (P0), $w_{4}$ by (2)), impossible
for $i\geq1$ by (1). For $i=0$ it forces $w_{1}^{0},w_{10}^{0}\in Q$; then
$w_{10}^{0}$ needs three $Q$-neighbors among $w_{1}^{0},w_{2}^{0},w_{9}^{0}$
($w_{0}^{0}$ by (P0), $w_{4}^{0}$ by (2)), hence all three, and $w_{1}^{0}$
has the two $Q$-neighbors $w_{2}^{0},w_{10}^{0}$ instead of one. So
$w_{2}\notin Q$, and then $w_{10}^{0}$ has at most the two admissible
neighbors $w_{1}^{0},w_{9}^{0}$, fewer than three.

\emph{(5) }$w_{5}\notin Q$ for $i\leq m-2$: its admissible neighbors are
among $w_{1}^{+}$ alone, fewer than $r(w_{5})=2$.

\emph{(6) }$w_{1}\notin Q$ and $w_{9}\notin Q$: the neighbors of $w_{1}$ are
$w_{0},w_{2},w_{10}$, with $w_{2}\notin Q$ by (4); for $i\geq1$,
$w_{0}=w_{5}^{i-1}\notin Q$ by (5) and $w_{10}=w_{6}^{i-1}\notin Q$ by (1),
and for $i=0$, $w_{10}^{0}\notin Q$ by (4) and $w_{0}^{0}$ is excluded by
(P0). Similarly all three neighbors $w_{4},w_{8},w_{10}$ of $w_{9}$ are
outside $Q$ by (2), (3), (1) and (4).

Hence $Q\cap B_{i}\subseteq\{w_{3}^{i},w_{7}^{i}\}$ for $i\leq m-2$ and
$Q\cap B_{m-1}\subseteq\{w_{3},w_{5},w_{6},w_{7}\}$. In the last block,
$w_{5}^{m-1}\in Q$ forces $w_{6}^{m-1}\in Q$ (its only admissible neighbor),
conversely $w_{6}^{m-1}\in Q$ forces $w_{5}^{m-1}\in Q$ (as $w_{7}$ is
excluded by (P0) and $w_{8}\notin Q$), and $w_{6}^{m-1}\in Q$ excludes
$w_{7}^{m-1}$ by (P0). So $Q\cap B_{m-1}$ is contained in $\{w_{3},w_{7}\}$
or in $\{w_{3},w_{5},w_{6}\}$, and $\left\vert Q\cap B_{m-1}\right\vert
\leq3$.

The index $i$ starts from $0$ here as well, and the answer to the question
whether $Q$ can be extended by vertices outside the blocks is \emph{yes} in
this case: $w_{0}^{0}$ has degree $2=k$, hence $r(w_{0}^{0})=0$, and it lies
in no block $B_{i}$, so nothing above decides it; it does lie in $Q$ for the
optimal set $Q_{2}$, and it is therefore counted separately. The other vertex
outside the blocks, $w_{10}^{0}$, is excluded by (4). Together with
$\left\vert Q\cap B_{i}\right\vert \leq2$ for $i\leq m-2$ and $\left\vert
Q\cap B_{m-1}\right\vert \leq3$ this gives $\left\vert Q\right\vert
\leq2(m-1)+3+1=2m+2$, which $Q_{2}$ attains.

\medskip

\noindent\textbf{Case $k=1$.} Now every $u\in Q$ has exactly one neighbor outside
$Q$. We show that no vertex lies in $Q$, again block by block.

\emph{(1) }$w_{4}\notin Q$: otherwise all but one of its six neighbors lie in
$Q$. If $w_{3}\in Q$, then $w_{2}\notin Q$ (as $w_{4}$ is already a
$Q$-neighbor of $w_{3}$), so $w_{2}$ is the unique neighbor of $w_{4}$
outside $Q$ and $w_{8},w_{9},w_{10}\in Q$; if $w_{3}\notin Q$, then $w_{3}$
is that unique neighbor and again $w_{8},w_{9},w_{10}\in Q$. In both cases
all three neighbors $w_{4},w_{8},w_{10}$ of $w_{9}$ lie in $Q$, a
contradiction.

\emph{(2) }$w_{3}\notin Q$: otherwise, by (1), $w_{2}\in Q$, and $w_{4}$ is
the unique outside neighbor of both $w_{3}$ and $w_{2}$, so $w_{1},w_{10}\in
Q$; then $w_{1}$ has the $Q$-neighbors $w_{2},w_{10}$, so $w_{0}\notin Q$,
and $w_{10}$ has the two outside neighbors $w_{0}$ and $w_{4}$, a
contradiction.

\emph{(3) }$w_{9}\notin Q$: otherwise $w_{4}$ is its unique outside neighbor,
so $w_{8},w_{10}\in Q$; then $w_{4}$ is also the unique outside neighbor of
$w_{8}$, so $w_{5},w_{6},w_{7}\in Q$, and $w_{7}$ has both neighbors
$w_{6},w_{8}$ in $Q$, a contradiction.

\emph{(4) }$w_{8}\notin Q$ (its neighbors $w_{4},w_{9}$ are both outside
$Q$), and then $w_{7}\notin Q$: otherwise $w_{6}\in Q$ with $w_{8}$ as the
unique outside neighbor of $w_{6}$; for $i\leq m-2$ this puts $w_{4}^{+}\in
Q$, contrary to (1), and for $i=m-1$ it puts $w_{5}\in Q$, whose neighbors
$w_{4},w_{8}$ are both outside $Q$, a contradiction.

\emph{(5) }$w_{6}\notin Q$ (its neighbors $w_{7},w_{8}$ are outside $Q$), so
$w_{10}^{i}\notin Q$ for $i\geq1$; also $w_{10}^{0}\notin Q$ (its neighbors
$w_{4}^{0},w_{9}^{0}$ are outside $Q$). Then successively $w_{2}\notin Q$
(neighbors $w_{3},w_{4},w_{10}$), $w_{5}\notin Q$ (neighbors $w_{4}%
,w_{6},w_{8}$), $w_{1}\notin Q$ (neighbors $w_{2},w_{10}$) and $w_{0}%
^{0}\notin Q$ (neighbors $w_{1}^{0},w_{10}^{0}$). So $Q=\emptyset$.

\medskip

It remains to add up. We have $Q=\emptyset$ for $k=1$ and, since
$\Delta(H_{m})=7$, also for $k\geq8$; so let $2\leq k\leq7$. Exactly two
vertices lie in no block, and they are governed by $d(u)\geq k$:

$i/$ $w_{0}^{0}$ has degree $2$, so it lies in $Q$ only if $k=2$;

$ii/$ $w_{10}^{0}$ was excluded above for $k\in\{2,3\}$ and has degree $5<k$
for $k\geq6$, so it lies in $Q$ only if $k\in\{4,5\}$.

Since $V(H_{m})$ is the disjoint union of $\{w_{0}^{0},w_{10}^{0}\}$ and the
blocks $B_{0},\ldots,B_{m-1}$, the bounds established in the seven cases above
are precisely the ones announced in the table at the beginning of this proof.
Consequently $\left\vert Q\right\vert \leq2m+2$ for every $k\geq1$, with
equality possible only for $k=2$, which proves the lemma.
\end{proof}

\begin{theorem}
\label{Tm_strip}For every $m\geq2$, $H_{m}$ is a \MOP of order $n=9m+2$ with
$\Delta(H_{m})=7$,
\[
\xi_{or}(H_{m})=2m+2\qquad\text{and}\qquad\mathrm{fd}(H_{m})=7m=\frac
{7(n-2)}{9}.
\]
In particular $\mathrm{fd}(H_{m})/n\rightarrow7/9$ as $m\rightarrow\infty$.
\end{theorem}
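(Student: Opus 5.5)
The plan is to assemble the theorem almost entirely from what has already been established for $H_{m}$, with Lemma \ref{Lemma_blocks} doing all the real work.

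\textbf{Structural claims.} First, $H_{m}$ is a \MOP, since gluing triangulated polygons along outer edges yields a triangulated polygon, as noted in the construction. Its order is $n=9m+2$, because $V(H_{m})$ is the disjoint union of $\{w_{0}^{0},w_{10}^{0}\}$ and the $m$ nine-element blocks $B_{i}$. We have $\Delta(H_{m})=7$, read off from the degree list: $d(w_{6}^{i})=7$ for $i\leq m-2$, which is nonempty as $m\geq2$, and all other degrees are at most $6$.

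\textbf{Computing $\xi_{or}(H_{m})$.} For the lower bound I will use the explicit set $Q_{2}$ exhibited before the lemma. I will briefly recheck that it is an OR-set of out-degree $2$ of size $2m+2$:
\begin{itemize}
\item it consists of $w_{0}^{0}$, the $2(m-1)$ vertices $w_{3}^{i},w_{7}^{i}$ with $i\leq m-2$, and the three vertices $w_{3}^{m-1},w_{5}^{m-1},w_{6}^{m-1}$;
\item the degree-$2$ members have no neighbor in $Q_{2}$;
\item the two degree-$3$ members $w_{5}^{m-1},w_{6}^{m-1}$ are adjacent to each other and have no other $Q_{2}$-neighbor.
\end{itemize}
For the upper bound I will invoke Lemma \ref{Lemma_blocks}: every non-empty OR-set $Q$ of $H_{m}$ satisfies $|Q|\leq 2m+2$. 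Taking the maximum over OR-sets gives $\xi_{or}(H_{m})=2m+2$.

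\textbf{Computing $\mathrm{fd}(H_{m})$.} Since $n\geq 2$, Proposition \ref{Prop_20} gives
\[
\mathrm{fd}(H_{m})=n-\xi_{or}(H_{m})=9m+2-(2m+2)=7m .
\]
Substituting $m=(n-2)/9$ yields $7(n-2)/9$, and dividing by $n$ and letting $m\to\infty$ gives the limit $7/9$.

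\textbf{Main obstacle.} Essentially all the difficulty sits in the upper bound $\xi_{or}(H_{m})\leq 2m+2$, which is exactly Lemma \ref{Lemma_blocks}. At this point the theorem is only a bookkeeping corollary. The one step needing care is to confirm that the lemma covers every out-degree $k\geq1$:
\begin{itemize}
\item $k=1$ gives the empty set;
\item $k\geq8$ is impossible by $\Delta=7$;
\item the table bounds for $2\leq k\leq 7$ are all at most $2m+2$ when $m\geq2$.
\end{itemize}
I will also confirm that $Q_{2}$ is non-empty, so it is a legitimate OR-set with positive out-degree.
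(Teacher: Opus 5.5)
Your proposal is correct and follows the paper's proof: the set $Q_{2}$ gives $\xi_{or}(H_{m})\geq 2m+2$, Lemma \ref{Lemma_blocks} applied to a (non-empty) maximum OR-set gives the matching upper bound, and Proposition \ref{Prop_20} then yields $\mathrm{fd}(H_{m})=7m$. Your re-checks of $Q_{2}$, $n=9m+2$ and $\Delta(H_{m})=7$ are accurate and simply restate facts recorded in the construction.
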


\begin{proof}
The set $Q_{2}$ gives $\xi_{or}(H_{m})\geq2m+2$, and Lemma \ref{Lemma_blocks},
applied to a maximum OR-set of $H_{m}$ (which is non-empty, since
$\xi_{or}(H_{m})\geq2m+2>0$), gives $\xi_{or}(H_{m})\leq2m+2$; Proposition
\ref{Prop_20} then yields
$\mathrm{fd}(H_{m})=9m+2-(2m+2)=7m$.
\end{proof}

\begin{remark}
\label{Rem_search}The gadget $\Gamma$, a triangulated \MOP on $11$ vertices,
was found to be optimal, with ratio $7/9$, among all triangulated $p$-gons
with $p\leq13$, by an exhaustive computer search over the strips of $m$ glued
copies that such a gadget creates.

Under the additional restriction $\Delta(G)\leq6$, suggested by equality in
(\ref{For_count}) in the proof of Theorem \ref{Tm_main}, we extended the
search up to $p=19$ and the best ratio limit found is $\mathrm{fd}%
/n\rightarrow10/13=0.7692\ldots<7/9$, attained by a certain triangulation of
the convex $15$-gon, the consequences of this are discussed in Remark
\ref{Rem_delta6}.

The proof of Lemma \ref{Lemma_blocks} and Theorem \ref{Tm_strip} does not depend
on this exhaustive search, which was aimed at finding some construction that will
beat $H_{m}$, by extending the gadget up to $p=13$ vertices, or extending the search
with gadget up to $p=19$ assuming $\Delta(G)\leq6$. Both directions revealed no better
constructions.
\end{remark}

%-------------
\section{Concluding remarks}

Caro, Hansberg and Henning \cite{CaroHansbergHenning} proved
$\mathrm{fd}(G)<17n/19$ for maximal outerplanar graphs $G$ of order $n$ and
asked whether $17/19$ is asymptotically optimal. We have shown that it is
not: allowing the out-regular sets of Proposition \ref{Prop_20} to be
regular rather than merely independent, and combining the degree classes
$V_{2}$ and $V_{3}$ into a single such set (Theorem \ref{Tm_half}), already
brings the constant down to $7/8$ (Theorem \ref{Tm_main}). In the other
direction, the family $H_{m}$ of Theorem \ref{Tm_strip} has $\mathrm{fd}%
(H_{m})/n\rightarrow7/9$. Writing
\[
c^{\ast}=\limsup_{n\rightarrow\infty}\ \frac{1}{n}\max\{\mathrm{fd}%
(G):G\text{ a \MOP of order }n\}
\]
for the best asymptotic constant, we therefore have
\[
\frac{7}{9}=0.7777\ldots\leq c^{\ast}\leq\frac{7}{8}=0.875,
\]
and we do not know the value of $c^{\ast}$.

Comparing the two ends of this interval with the proof of Theorem
\ref{Tm_main} is instructive. For $H_{m}$ one has $n_{2}=2m+1$ and
$n_{2}+n_{3}=4m+3$ against $\xi_{or}(H_{m})=2m+2$, so the bounds (i) and (ii)
of that proof are tight up to an additive constant --- indeed (ii) is tight in
its integral form, as noted in Remark \ref{Rem_half_sharp} --- whereas
$n_{4}=2m-1$ and
$n_{5}=m+1$ are far below the $3\xi_{or}(H_{m})=6m+6$ allowed by (iii) of the
proof of Theorem \ref{Tm_main}. The gap between $7/9$ and $7/8$ thus stems
almost entirely from the above gap from (iii) of the proof of Theorem
\ref{Tm_main}, as Remark \ref{Rem_bottleneck} anticipated; a smaller part of
it is lost in inequality (\ref{For_count}), since $\Delta(H_{m})=7>6$ and
hence $\sum_{i\geq7}(i-6)n_{i}=n_{7}=m-1$, as Remark \ref{Rem_delta6} notes.
Quantitatively, for $H_{m}$ the chain of that proof reads $2n+6=18m+10\leq
4n_{2}+3n_{3}+2n_{4}+n_{5}=19m+9\leq16\,\xi_{or}(H_{m})=32m+32$, with total
slack $14m+22$, of which $m-1$ sits in (\ref{For_count}) and $13m+23$ in
(iii); so of the asymptotic gap $7/8-7/9=7/72$ the share of (iii) is $13/14$
and that of (\ref{For_count}) is $1/14$. Conversely, equality throughout the proof
of Theorem \ref{Tm_main} would require, asymptotically, $n_{2}=n_{3}=n/8$,
$n_{4}=n_{5}=3n/8$, $n_{i}=o(n)$ for $i\geq6$, and $\alpha(G[V_{4}%
])=n_{4}/3$, $\alpha(G[V_{5}])=n_{5}/3$: both $G[V_{4}]$ and $G[V_{5}]$
would have to have independence ratio exactly $1/3$, the minimum possible
for an outerplanar graph. Narrowing the gap from above therefore requires a
genuinely \MOP-specific fact about the classes $V_{4},V_{5}$, in the spirit
of Lemma \ref{Lemma_V2V3} and Corollary \ref{Cor_runs}.

\begin{remark}
\label{Rem_delta6}Equality in (\ref{For_count}) of the proof of Theorem \ref{Tm_main} requires $n_{i}=0$ for all
$i\geq7$, so a \MOP for which the proof of Theorem \ref{Tm_main} is tight
would have to have maximum degree at most $6$. This makes the \MOPs with
$\Delta\leq6$ worthy of considering separately, and we did so by computer.
An exhaustive computation over all \MOPs of order $n$ with $\Delta\leq6$
gives $\max\mathrm{fd}(G)=12,13,14$ for $n=17,18,19$, that is,
$\mathrm{fd}(G)/n\leq14/19=0.7368\ldots$ in this range. Among the strip
constructions of Remark \ref{Rem_search}, the best limit under $\Delta\leq6$
is the ratio $10/13=0.7692\ldots$ recorded there. So, within the class where
the counting argument of Theorem \ref{Tm_main} could possibly be tight, the
largest ratio we know of is $10/13$, which is \emph{smaller} than the ratio
$7/9$ that the family $H_{m}$ of Theorem \ref{Tm_strip} already achieves
without any degree restriction. In other words, restricting to $\Delta\leq6$
produces nothing better than the lower bound already in hand, so no
improvement of the constant $7/9$ is to be expected from that direction.
\end{remark}

\bigskip

\bigskip\noindent\textbf{AI disclosure.} The mathematical idea of this paper
is the authors' own: to treat $V_{2}\cup V_{3}$ as a single degree class and to
use out-regular sets in full generality rather than only independent ones. In
carrying out that plan the authors used a general-purpose AI
assistant, Anthropic's Claude Opus 5, for three distinct tasks. First, in Sections
\ref{Sec_prelim}--\ref{Sec_upper} it was used to help draft and to
cross-check the proofs. Second, the construction of Section \ref{Sec_lower},
which was suggested by looking for the graphs that make the inequalities of
Theorem \ref{Tm_main} tight, was developed with its assistance, and the
computer searches reported in Remarks \ref{Rem_search} and \ref{Rem_delta6},
as well as the independent verification of $\xi_{or}(H_{m})=2m+2$ for small
$m$, were programmed with its help. Third, it was used in editing the final
text. Every statement and every proof in this paper was checked by the
authors, who are fully responsible for its content.

\bigskip\noindent\textbf{Acknowledgments.}~~The second author was partially
supported by the Slovenian Research and Innovation Agency (ARIS) through the
research program P1-0383.

\end{document}